\newcommand\norm[1]{\lVert #1 \rVert}
\newcommand{\Rn}{\mathbb{R}^n}
\newcommand{\R}{\mathbb{R}}
\theoremstyle{definition}
\newtheorem{obs}{Remark}
\theoremstyle{plain}
\newtheorem{teo}{Theorem}
\newtheorem{lema}{Lemma}
\newtheorem{prop}{Proposition}
\newtheorem{hip}{Assumption}
\begin{document}
	\title{On the regularization property of Levenberg-Marquardt method with Singular Scaling for nonlinear inverse problems \footnote{This work is supported by FAPESC (Fundação de Amparo à Pesquisa e Inovação do Estado de Santa Catarina) [grant number 2023TR000360]}}
	
	\author[1]{Rafaela Filippozzi \footnote{rafaela.filippozzi@posgrad.ufsc.br}}
	\author[1]{Everton Boos\footnote{everton.boos@ufsc.br}}
	\author[1]{Douglas S. Gon\c{c}alves\footnote{douglas.goncalves@ufsc.br}}
	\author[1]{Ferm\'{i}n S. V. Baz\'{a}n\footnote{fermin.bazan@ufsc.br}}
	
	\affil[1]{Department of Mathematics, Federal University of Santa Catarina, Florian\'{o}polis, 88040900, SC, Brazil}
	
	\date{}
	\maketitle

	\begin{abstract}
	Recently, in Applied Mathematics and Computation 474 (2024) 128688, a Levenberg-Marquardt method  (LMM) with Singular Scaling was analyzed and successfully applied in parameter estimation problems in heat conduction where the use of a particular singular scaling matrix (semi-norm regularizer) provided approximate solutions of better quality than those of the classic LMM. Here we propose a regularization framework for the Levenberg–Marquardt method with Singular Scaling (LMMSS) applied to nonlinear inverse problems with noisy data. Assuming that the noise-free problem admits exact solutions (zero-residual case), we consider the LMMSS iteration where the regularization effect is induced by the choice of a possibly singular scaling matrix and an implicit control of the regularization parameter. The discrepancy principle is used to define a stopping index that ensures stability of the computed solutions with respect to data perturbations. Under a new Tangent Cone Condition, we prove that the iterates obtained with noisy data converge to a solution of the unperturbed problem as the noise level tends to zero.  This work represents a first step toward the analysis of regularizing properties of the LMMSS method and extends previous results in the literature on regularizing LM-type methods.
	\end{abstract}
	
	\textbf{Keywords.} Inverse problems, Levenberg-Marquardt, Singular Scaling Matrix, Regularization

	\section{Introduction}
	In this paper, we investigate the global and local convergence properties of the Levenberg–Marquardt method applied to a nonlinear least-squares (NLS) problem of the form:
	\begin{equation}\label{prob1}
		\min_{x\in \mathbb{R}^n} \quad \frac{1}{2}\|F(x)-y\|^2 := \phi(x),
	\end{equation}
	where $F: \mathbb{R}^n \rightarrow \mathbb{R}^m$ is a twice continuously differentiable function. We focus on the \emph{overdetermined} case, where $m \geq n$, and assume that the optimal value of \eqref{prob1} is zero, i.e., the set
	\[
	X^* := \{x \in \mathbb{R}^n : F(x) = y\}
	\]
	is nonempty.
	
	Let  	
	\begin{equation}\label{probdisc}
	F(x)= y, 	
	\end{equation} 	
	be obtained from the discretization of a problem modeling an inverse problem. 
	It is realistic to have noisy data $y^{\delta}$ at disposal, satisfying  	
	\begin{equation}\label{probruido} 		\norm{y-y^{\delta}}\leq \delta, 
	\end{equation} 	
	for some positive $\delta$. Thus, in practice it is necessary to solve a problem of the form 	 	\begin{equation}\label{probdiscruido} 		
		F(x)= y^{\delta}, 	
	\end{equation} 	
	and, due to ill-posedeness, possible solutions may be arbitrarily far from those of the original problem. 
	
	A regularization method is said to be effective if it satisfies the following: when the iteration is stopped at a suitable index $k^*(\delta)$,
	\begin{itemize}
		\item $x^{\delta}_{k^*(\delta)}$ approximates a solution of \eqref{probdisc};
		\item $x^{\delta}_{k^*(\delta)} \to x^*$ as $\delta \to 0$ for some $x^* \in X^*$;
		\item in the noise-free case, convergence to a solution of \eqref{probdisc} occurs.
	\end{itemize}
	In \cite{hanke1997regularizing}, a regularizing version of the Levenberg–Marquardt method was proposed for such inverse problems, under the assumption that the initial guess is sufficiently close to a true solution. This method incorporates an implicit stepsize control and employs the discrepancy principle as a stopping rule:
	\begin{equation}\label{criteriodiscrepancia}
		\|y^\delta - F(x^{\delta}_{k^*(\delta)})\| \leq \tau \delta < \|y^\delta - F(x^\delta_k)\|, \quad 0 \leq k < k^*(\delta),
	\end{equation}
	for some appropriately chosen $\tau > 1$. Hanke showed that the procedure satisfies the regularizing properties listed above, and that local convergence is guaranteed under conditions weaker than the standard local error-bound assumption typically required when the Jacobian $J$ is singular at the target solution \cite{behling2019local,bellavia2016adaptive}.
	
	Inspired by \cite{boos2023levenberg}, throughout this manuscript, we consider the following iteration:
	\begin{align}
		(J_k^T J_k + \lambda_k L^T L) d_k &= -J_k^T(F_k - y^\delta), \label{s1} \\
		x_{k+1} &= x_k + \alpha_k d_k, \quad \forall k \geq 0, \label{s2}
	\end{align}
	where $F_k := F(x_k^\delta)$, $J_k := J(x_k^\delta)$ is the Jacobian of $F$ at $x_k^\delta$, $\lambda_k > 0$ is a regularization parameter, $\alpha_k$ is the step size, and $L^T L$ is a scaling matrix, which is allowed to be singular. 
	We refer to iteration \eqref{s1}--\eqref{s2} as the Levenberg–Marquardt method with Singular Scaling (LMMSS). Here we focus on the “pure” version of this method, i.e., with $\alpha_k = 1$ for all $k$.

	In this work, we prove that the iterates generated for LMSS with noisy data converge to a solution of the unperturbed problem as the noise level tends to zero. This study represents a first step toward the analysis of the regularizing properties of the LMMSS method and extends previous results on regularizing LM-type methods \cite{ bellavia2016adaptive, engl,hanke1997regularizing,Hansen1998,Kaltenbacher2008,Morozov1984}.
	For that, we introduce a variant of the Tangent Cone Condition, adapted to the specific structure of the LMMSS method. 
	
	Our study is organized as follows. The assumptions and auxiliary results required for the analysis are presented in Section \ref{sec:assump}. The main theoretical results, including the convergence of the method for noisy data, are established in Section \ref{sec:princiresult}. In particular, we prove that the iterates produced by the method satisfy a regularizing property, as stated in Theorem~\ref{teo:convergenciacomruido}. 
	Final remarks are given in Section \ref{sec:conclusion}.

	\section{Assumptions and Auxiliary Results}\label{sec:assump}
	
	Recall that throughout this manuscript, we assume that the set of stationary points of $\phi$ is non-empty: $X^* \ne \emptyset$. 
	
	Assumption~\ref{Hip_nullJLnew} below guarantees that $d_k$ solution of \eqref{s1} is unique.
	
	\begin{hip}\label{Hip_nullJLnew}
		The matrix $L\in \R^{p\times n}$ has $\text{rank}(L) = p \leq n \leq m$ and there exists $\Omega \subset \Rn$ and $\gamma >0$ such that, for every $x \in \Omega$
		\begin{equation}\label{comp-condition}
			\norm{J(x) v}^2 + \norm{L v} ^2 \geq \gamma \norm{v}^2, \quad \forall v \in \R^n.
		\end{equation}
	\end{hip}
	
	Such assumption comes from the inverse problems literature \cite{Morozov1984,engl} where it is sometimes called \emph{completeness condition}. 
	It was a key assumption in \cite{boos2023levenberg} to study the local convergence of LMMSS in the zero residue setting. 
	It is worth to point out that such assumption is equivalent to 
	\begin{equation*}
		\mathcal{N} (J(x)) \cap \mathcal{N} (L) = \{ {\bf 0} \}, \quad  \forall x \in \Omega.
	\end{equation*}
	
	We start by mentioning that \eqref{s1} always has a solution. In fact, \eqref{s1} corresponds to the normal equation $B^T B d = B^T c$ of the augmented system
	\[
	Bd := \left[ \begin{array}{c}
		J_k \\
		\sqrt{\lambda_k} L
	\end{array} \right] d = - \left[ \begin{array}{c}
		F_k-y^\delta \\
		0
	\end{array} \right] =: c,
	\]
	and the normal equation always has a solution because $B^T c$ lies in the range of $B^T B$. 
	But it is Assumption~\ref{Hip_nullJLnew} that ensures the solution of \eqref{s1} is unique. 
	Indeed, it is easy to see that condition \eqref{comp-condition} at $x_k$ implies that $J_k^T J_k + \lambda_k L^T L$ is positive definite (and thus nonsingular).

	Now, we will show the Generalized Singular Value Decomposition (GSVD), 
	an important tool in theoretical analysis of the LMMSS direction. 
	
	\begin{teo}(GSVD) \cite[p. 22]{Hansen1998}\label{teoGSVD}
		Consider the pair $(A, L)$, where $A \in \mathbb{R}^{m \times n}$, $L \in \mathbb{R}^{p \times n}$, $m \geq n \geq p$, $rank(L) = p$, and $\mathcal{N} (A) \cap \mathcal{N}(L) = \{ 0 \}$. 
		Then, there exist matrices $U \in \mathbb{R}^{m \times n}$ and $V \in \mathbb{R}^{p \times p}$ with orthonormal columns 
		and a nonsingular matrix $X \in \mathbb{R}^{n \times n}$ such that 
		\begin{equation}\label{GSVD}
			A = U \left[ \begin{array}{cc}
				\Sigma & 0 \\ 
				0 & I_{n-p}
			\end{array}  \right] X^{-1}  \quad \text{and} \quad L = V \left[ \begin{array}{cc}
				M & 0
			\end{array}  \right] X^{-1},
		\end{equation}
		with $\Sigma$ and $M$ being the following diagonal matrices:
		\begin{equation}
			\Sigma = \text{diag}(\sigma_{1}, \dots, \sigma_{p}) \in \mathbb{R}^{p \times p} \quad \text{and} \quad M =  \text{diag}(\mu_{1}, \dots, \mu_{p}) \in \mathbb{R}^{p \times p}.
		\end{equation}
		Moreover, the elements of $\Sigma$ and $M$ are non-negative, ordered as follows:
		\begin{equation}
			0 \leq \sigma_{1} \leq \dots \leq \sigma_{p} \leq 1 \quad \text{and} \quad 1 \geq \mu_{1} \geq \dots \geq \mu_{p} >0,
		\end{equation}
		and normalized by the relation $\sigma_{i}^{ 2} + \mu_{i}^{ 2} = 1$, for $i = 1, \dots, p$. 
		We call the generalized singular value of the pair $(A, L)$ the ratio
		\begin{equation*}
			\zeta_{i} = \frac{\sigma_{i}}{\mu_{i}}, \quad i = 1, \dots, p.
		\end{equation*}
	\end{teo}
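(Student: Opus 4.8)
The plan is to derive the GSVD from the simultaneous diagonalization of a commuting pair of symmetric positive semidefinite matrices. First I would observe that the hypothesis $\mathcal{N}(A)\cap\mathcal{N}(L)=\{0\}$ is equivalent to $A^TA+L^TL$ being positive definite: if $v^T(A^TA+L^TL)v = \|Av\|^2+\|Lv\|^2 = 0$, then $v\in\mathcal{N}(A)\cap\mathcal{N}(L)=\{0\}$. Hence, via a Cholesky factorization, $A^TA + L^TL = R^TR$ with $R\in\mathbb{R}^{n\times n}$ nonsingular. Define $P := R^{-T}A^TAR^{-1}$ and $Q := R^{-T}L^TLR^{-1}$; both are symmetric positive semidefinite and $P + Q = I_n$.

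Second, since $P + Q = I_n$ the matrices commute, because $PQ = P(I_n-P) = (I_n-P)P = QP$, and commuting symmetric matrices are simultaneously orthogonally diagonalizable; so there is an orthogonal $W \in \mathbb{R}^{n\times n}$ with $W^TPW = \mathrm{diag}(s_1,\dots,s_n)$ and $W^TQW = \mathrm{diag}(1-s_1,\dots,1-s_n)$, each $s_i\in[0,1]$. Congruence by the nonsingular matrix $R^{-1}$ preserves rank, so $\mathrm{rank}(Q) = \mathrm{rank}(L^TL) = \mathrm{rank}(L) = p$; thus exactly $n-p$ of the numbers $1-s_i$ are zero, that is, exactly $n-p$ of the $s_i$ equal $1$. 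I would reorder the columns of $W$ so that those indices are the last $n-p$; writing the remaining $p$ values as $\sigma_i^2$ with $0\le\sigma_1\le\dots\le\sigma_p\le 1$ and setting $\mu_i^2 := 1-\sigma_i^2$, one gets $\mu_1\ge\dots\ge\mu_p>0$ (positivity because $1-s_i\ne 0$ at these indices) and the normalization $\sigma_i^2+\mu_i^2=1$, $i=1,\dots,p$.

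Third, I would read off the decomposition. Let $X := R^{-1}W$, which is nonsingular, so $X^{-1}=W^TR$. Then $(AX)^T(AX) = W^TPW = \begin{bmatrix}\Sigma^2 & 0 \\ 0 & I_{n-p}\end{bmatrix}$ and $(LX)^T(LX) = W^TQW = \begin{bmatrix}M^2 & 0 \\ 0 & 0\end{bmatrix}$, where $\Sigma = \mathrm{diag}(\sigma_1,\dots,\sigma_p)$ and $M = \mathrm{diag}(\mu_1,\dots,\mu_p)$. Consequently the columns of $AX$ are pairwise orthogonal with norms $\sigma_1,\dots,\sigma_p,1,\dots,1$, and those of $LX$ are pairwise orthogonal with norms $\mu_1,\dots,\mu_p$ followed by $n-p$ zeros. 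Normalizing the nonzero columns of $AX$, and whenever $\sigma_i=0$ replacing the vanishing column by a unit vector orthogonal to all the others (possible since $m\ge n$), produces $U\in\mathbb{R}^{m\times n}$ with orthonormal columns satisfying $AX = U\begin{bmatrix}\Sigma & 0 \\ 0 & I_{n-p}\end{bmatrix}$. Likewise, the leading $p\times p$ block of $LX$ has pairwise orthogonal columns of positive norms $\mu_i$, so normalizing them gives an orthogonal $V\in\mathbb{R}^{p\times p}$ with $LX = V\begin{bmatrix}M & 0\end{bmatrix}$. Multiplying both identities on the right by $X^{-1}$ yields exactly \eqref{GSVD}, and the generalized singular values are $\zeta_i = \sigma_i/\mu_i$.

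I expect the delicate points to be organizational rather than deep. One must carefully track how $\mathrm{rank}(L)=p$ forces precisely $n-p$ unit entries $\sigma_i$ and $p$ strictly positive entries $\mu_i$ — so that $M$ is invertible and $V$ is genuinely orthogonal — and one must treat the degenerate case $\sigma_i=0$ (a vanishing column of $AX$) with care so that $U$ still ends up with orthonormal columns. The conceptual core is the simultaneous orthogonal diagonalization of the commuting pair $(P,Q)$; an equivalent route is to apply the CS decomposition to the partitioned matrix obtained from a thin QR factorization of $\left[\begin{smallmatrix}A\\ L\end{smallmatrix}\right]$, which has full column rank precisely by the null-space hypothesis.
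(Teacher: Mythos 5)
Your construction is correct, but note that the paper does not prove this statement at all: Theorem~\ref{teoGSVD} is quoted verbatim from Hansen's book \cite[p.~22]{Hansen1998} and used as a black box, so there is no internal proof to compare against. What you give is a legitimate self-contained derivation by the standard route: positive definiteness of $A^TA+L^TL$ from $\mathcal{N}(A)\cap\mathcal{N}(L)=\{0\}$, a Cholesky factor $R$, simultaneous orthogonal diagonalization of the pair $P=R^{-T}A^TAR^{-1}$ and $Q=I_n-P$ (here you do not even need the commuting-matrices lemma, since diagonalizing $P$ automatically diagonalizes $I_n-P$), the rank count $\mathrm{rank}(Q)=\mathrm{rank}(L)=p$ forcing exactly $n-p$ eigenvalues $s_i=1$ and $\mu_i>0$ on the remaining block, and then $X=R^{-1}W$ with column normalization of $AX$ and $LX$ to produce $U$ and $V$. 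The two delicate points you flag are indeed the right ones, and you resolve them correctly: invertibility of $M$ (hence orthogonality of $V$) comes from the rank argument, and the degenerate columns of $AX$ with $\sigma_i=0$ can be replaced by orthonormal vectors in the orthogonal complement because $m\ge n$; the identity $AX=U\,\mathrm{diag}(\Sigma,I_{n-p})$ is unaffected since those columns are multiplied by $\sigma_i=0$. Your closing remark that this is equivalent to applying the CS decomposition to a thin QR factorization of the stacked matrix is also accurate and is essentially how Hansen's reference presents it. In short: the paper buys brevity by citation to a classical result, while your argument buys self-containedness at the cost of a page of linear algebra; either is acceptable, and your proof has no gap.
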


	By considering the GSVD of the pair $(J_k, L)$ we can provide a useful characterization of the direction $d_k$ in LMMSS.
	In fact, given the GSVD 
	
	\begin{equation*}
		J_k = U_k \left[ \begin{array}{cc}
			\Sigma_k & 0 \\
			0 &I_{n-p}
		\end{array}  \right] X_k^{-1}  \quad \text{and} \quad L = V_k \left[ \begin{array}{cc}
			M_k & 0
		\end{array}  \right] X_k^{-1},
	\end{equation*}
	where   \begin{equation}\label{eq:sigmamuigsvd}
		(\Sigma_k)_{ii} := \sigma_{i,k} \quad \text{and} \quad (M_k)_{ii} := \mu_{i,k}, \quad i = 1, \dots, p;
	\end{equation}
	it follows that   
	\begin{equation}\label{JmuL}
		J_k^TJ_k + \lambda_k L^TL = X_k^{-T} \left[ \begin{array}{cc}
			\Sigma_k^2 + \lambda_k M_k^2 & 0 \\ 
			0 & I_{n-p}
		\end{array}  \right] X_k^{-1}.  
	\end{equation}
	
	Then, $d_k$ from \eqref{s1} can be expressed as: 
	\begin{equation}\label{eq:dkGSVD}
		d_k = -X_k \left[ \begin{array}{cc}
			\Gamma_k & 0 \\ 
			0 & I_{n-p}
		\end{array}  \right] X_k^T {J_k}^T (F_k-y^\delta), 
	\end{equation}
	with $\Gamma_k:=(\Sigma_k^2 + \lambda_k M_k^2)^{-1}.$

	In \cite{hanke1997regularizing}, the analysis of the regularizing properties of the Levenberg–Marquardt method was carried out under the assumption that problem \eqref{prob1} is solvable and that the so-called \emph{tangential cone condition} holds, which controls the Taylor remainder of $F$.
	
	In our context, however, we work with a variant of the method that incorporates a possibly singular matrix $L$, the Levenberg–Marquardt method with Singular Scaling (LMMSS). This introduces new challenges in the analysis, particularly in controlling the iterates in a geometry induced by $L$, and demands a slightly stronger version of the classical tangential cone condition.
	
	We define the inner product
	\[
	\langle x, y \rangle_{L^T L} := \langle x, L^T L y \rangle,
	\]
	with the corresponding norm
	\[
	\|x\|_L^2 := \langle x, L^T L x \rangle = \langle Lx, Lx \rangle = \|Lx\|_2^2.
	\]
	
	Note that \begin{equation}\label{eq:isolandoLtL}
		\lambda_kL^TL(x_{k+1}-x_{k})= -J_k^T(F_k-y^\delta)-J_k^TJ_k(x_{k+1}-x_{k})
	\end{equation}
	Then, 
	\begin{align}
		\norm{x_{k+1}-x_k}_{L}^2&= (x_{k+1}-x_k)^TL^TL(x_{k+1}-x_k) \nonumber\\
		&=-\frac{1}{\lambda_k}(x_{k+1}-x_k)^TJ_k^T(F_k-y^\delta) -\frac{1}{\lambda_k}\norm{J_k(x_{k+1}-x_k)}^2.
		\label{eq:normLxk}
	\end{align}

	We also define a ball centered at a point \(x \in \mathbb{R}^n\), with radius \(\rho > 0\), in the \(L\)-norm as
	\[
	B_L(x, \rho) := \{ y \in \mathbb{R}^n : \|y - x\|_L \leq \rho \}.
	\]
	
	From now on, we assume that \(x^* \in X^*\) is a solution that minimizes the \(L\)-norm distance to the initial guess \(x_0\), that is,
	\[
	\|x_0 - x^*\|_L \leq \|x_0 - z\|_L, \quad \forall z \in X^*.
	\]
	
	To ensure convergence in this modified setting, we introduce the following assumption, which we will call the TCC-$L$ condition:
	
	\begin{hip}\label{hip:tccLTL}
		Given an initial guess \(x_0\), there exist positive constants \(\rho\) and \(c\) such that the discretized problem \eqref{prob1} is solvable in the ball \(B_L(x_0, 2\rho)\), and
		\begin{equation}\label{eq:tcc}
			\|J(x)(\tilde{x} - x) - F(\tilde{x}) + F(x)\| \leq c \|\tilde{x} - x\|_L \|F(\tilde{x}) - F(x)\|, \quad \forall x, \tilde{x} \in B_L(x_0, \rho).
		\end{equation}
	\end{hip}
	
	Note that when $L= I$ or $L$ is nonsingular , this stronger version of the tangential cone condition reduces to the classical one. Indeed, in this case, the $L$-norm is equivalent to the Euclidean norm, since the singular values of $L$ are all strictly positive.
	
	More precisely, let $\sigma_{\min}$ and $\sigma_{\max}$ denote the smallest and largest singular values of $L$, respectively. Then, for all $x \in \mathbb{R}^n$, we have
	$$
	\sigma_{\min} \|x\| \leq \|Lx\| = \|x\|_L \leq \sigma_{\max} \|x\|.
	$$
	Therefore, the $L$-norm is equivalent to the Euclidean norm and, in particular, we obtain that
	$$
	\|\tilde{x} - x\|_L \|F(\tilde{x}) - F(x)\| \leq \sigma_{\max} \|\tilde{x} - x\| \|F(\tilde{x}) - F(x)\|.
	$$
	Thus, condition \eqref{eq:tcc} implies the classical tangential cone condition:
	$$
	\|J(x)(\tilde{x} - x) - F(\tilde{x}) + F(x)\| \leq c' \|\tilde{x} - x\| \|F(\tilde{x}) - F(x)\|,
	$$
	where $c' := c \, \sigma_{\max}$. This shows that the TCC-$L$ condition is stronger than, and reduces to, the classical TCC when $L$ is nonsingular.

	\begin{hip}\label{assumption2.2bellavia2016}
		Let $x_0,c$ and $\rho$ as in Assumption~\ref{hip:tccLTL}, $x^*$ be a solution of \eqref{probdisc} and $x_0$ satisfy
		\begin{equation}
			\norm{x_0-x^*}_{L}< \min\{\frac{q}{c}, \rho\}, \text{ if }\delta=0,
		\end{equation}
		\begin{equation}
			\norm{x_0-x^*}_{L}< \min\{\frac{q\tau -1}{c(1+\tau)}, \rho\}, \text{ if }\delta>0,
		\end{equation}
		where $\tau>1/q$.
	\end{hip}
	
	This assumption above plays a crucial role in local convergence analysis and has been extensively employed in the literature see,\cite{bellavia2016adaptive,hanke1997regularizing,Hansen1998,Kaltenbacher2008,Morozov1984}.
	
				\section{Regularizing for LMMSS}\label{sec:princiresult}
				
				In the regularizing Levenberg–Marquardt method \cite{hanke1997regularizing}, the parameter $\lambda_k$ is selected as the solution $\lambda_k^q$ of the following nonlinear scalar equation, in order to approximate solutions of problem \eqref{prob1}
				\begin{equation}\label{eq:qconditionigualdade}
					\norm{F_k-y^\delta+J_kd(\lambda)}= q\norm{F_k-y^\delta},
				\end{equation}
				for some fixe $q\in (0,1)$.
				
				Under suitable assumptions discussed below, $\lambda_k^q$ is uniquely determined from \eqref{eq:qconditionigualdade}.
				
				\begin{lema}\label{lem:intervaloqcondition}
					Suppose $\norm{J_k^TF_k} \neq 0$. Let $d_k:= d(\lambda)$ be the solution of \eqref{s1}, with $\lambda>0$ and $P_k$ be the orthogonal projector onto $\mathcal{R}(J_k)^\perp$. Then,
					\begin{enumerate}[label=(\alph*)]
						\item Equation \eqref{eq:qconditionigualdade} is not solvable if $\norm{P_k(F_k-y^\delta)}> q \norm{F_k-y^\delta}.$
						\item If 
						\begin{equation}\label{eq:lemaexistqcondition}
							\norm{F_k-y^\delta +J_k(x^*-x_k^\delta)}\leq \dfrac{q}{\theta}\norm{F_k-y^\delta},	
						\end{equation}
						
						for some $\theta>1$, and $x_*$ is a solution of \eqref{probdisc}, then Equation \eqref{eq:qconditionigualdade} has a unique solution $\lambda_k^q$ such that 
						\begin{equation}\label{eq:intervalolambda}
							\lambda_k^q \in \Big(0, \frac{q}{1-q}\zeta_{p,k}^2\Big),
						\end{equation}
						where $\zeta_{p,k}^2=\frac{\sigma^2_{p,k}}{\mu^2_{p,k}}$.
					\end{enumerate}
				\end{lema}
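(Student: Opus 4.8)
The plan is to reduce the whole statement to the behaviour on $(0,\infty)$ of the scalar function $\varphi(\lambda):=\|F_k-y^\delta+J_kd(\lambda)\|^{2}$, which I will analyse through the GSVD of the pair $(J_k,L)$ (Theorem~\ref{teoGSVD}). Substituting the expression \eqref{eq:dkGSVD} for $d(\lambda)$ and using $I-\Sigma_k^{2}\Gamma_k=\lambda M_k^{2}\Gamma_k$ together with the normalization $\sigma_{i,k}^{2}+\mu_{i,k}^{2}=1$ (for an index with $\sigma_{i,k}=0$ one has $\zeta_{i,k}=0$, the associated filter factor is $1$, and the corresponding component of $F_k-y^\delta$ lies in $\mathcal{R}(J_k)^{\perp}$, hence is already contained in $P_k(F_k-y^\delta)$), I expect
\[
F_k-y^\delta+J_kd(\lambda)=P_k(F_k-y^\delta)+\sum_{i:\,\sigma_{i,k}>0}\frac{\lambda}{\zeta_{i,k}^{2}+\lambda}\,\langle u_{i,k},F_k-y^\delta\rangle\,u_{i,k},
\]
with $u_{i,k}$ the columns of $U_k$ and $\zeta_{i,k}^{2}=\sigma_{i,k}^{2}/\mu_{i,k}^{2}$. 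Since $u_{i,k}\in\mathcal{R}(J_k)$ whenever $\sigma_{i,k}>0$, the two pieces are orthogonal and the $u_{i,k}$ are orthonormal, so
\[
\varphi(\lambda)=\|P_k(F_k-y^\delta)\|^{2}+\sum_{i:\,\sigma_{i,k}>0}\Big(\frac{\lambda}{\zeta_{i,k}^{2}+\lambda}\Big)^{2}\langle u_{i,k},F_k-y^\delta\rangle^{2}.
\]
From this I read off: $\varphi$ is continuous and non-decreasing on $(0,\infty)$; $\varphi(\lambda)\ge\|P_k(F_k-y^\delta)\|^{2}$ with $\varphi(0^{+})=\|P_k(F_k-y^\delta)\|^{2}$; and each summand is either identically zero (when $\langle u_{i,k},F_k-y^\delta\rangle=0$) or strictly increasing, so $\varphi$ is either constant on all of $(0,\infty)$ or strictly increasing there.

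\textbf{Part (a).} This is immediate from the lower bound: if $\|P_k(F_k-y^\delta)\|>q\|F_k-y^\delta\|$, then $\varphi(\lambda)\ge\|P_k(F_k-y^\delta)\|^{2}>q^{2}\|F_k-y^\delta\|^{2}$ for every $\lambda>0$, and \eqref{eq:qconditionigualdade} is precisely $\varphi(\lambda)=q^{2}\|F_k-y^\delta\|^{2}$, hence unsolvable.

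\textbf{Part (b).} Set $\lambda^{\star}:=\frac{q}{1-q}\zeta_{p,k}^{2}$. The goal is to prove $\varphi(0^{+})<q^{2}\|F_k-y^\delta\|^{2}<\varphi(\lambda^{\star})$; the intermediate value theorem then produces a root $\lambda_k^{q}\in(0,\lambda^{\star})$, and the root is unique because $\varphi(0^{+})<\varphi(\lambda^{\star})$ forces $\varphi$ to be strictly increasing (by the dichotomy above), not merely non-decreasing. The left inequality uses $P_kJ_k=0$: then $\|P_k(F_k-y^\delta)\|=\|P_k\big(F_k-y^\delta+J_k(x^{*}-x_k^\delta)\big)\|\le\|F_k-y^\delta+J_k(x^{*}-x_k^\delta)\|\le\tfrac{q}{\theta}\|F_k-y^\delta\|<q\|F_k-y^\delta\|$, i.e.\ $\varphi(0^{+})<q^{2}\|F_k-y^\delta\|^{2}$. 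For the right inequality I use the GSVD ordering $\zeta_{1,k}\le\cdots\le\zeta_{p,k}$: at $\lambda=\lambda^{\star}$ one has $\tfrac{\lambda^{\star}}{\zeta_{p,k}^{2}+\lambda^{\star}}=q$, hence $\tfrac{\lambda^{\star}}{\zeta_{i,k}^{2}+\lambda^{\star}}\ge q$ for every $i$ with $\sigma_{i,k}>0$, so every filter factor appearing in $\varphi(\lambda^{\star})$ is at least $q$ and therefore $\varphi(\lambda^{\star})\ge q^{2}\sum_{i:\,\sigma_{i,k}>0}\langle u_{i,k},F_k-y^\delta\rangle^{2}+\|P_k(F_k-y^\delta)\|^{2}$.

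\textbf{Main obstacle.} The delicate point is upgrading this last bound to the strict inequality $\varphi(\lambda^{\star})>q^{2}\|F_k-y^\delta\|^{2}$: the filter sum does not capture the components of $F_k-y^\delta$ along directions with $\sigma_{i,k}=0$ nor along the $I_{n-p}$-block of the GSVD, so $\sum_{i:\,\sigma_{i,k}>0}\langle u_{i,k},F_k-y^\delta\rangle^{2}+\|P_k(F_k-y^\delta)\|^{2}$ need not equal $\|F_k-y^\delta\|^{2}$. This is exactly where hypothesis \eqref{eq:lemaexistqcondition} has to be exploited quantitatively — and where it matters that $x^{*}$ is a genuine solution of \eqref{probdisc} rather than an arbitrary point — in order to bound the unseen part of the residual by $\|P_k(F_k-y^\delta)\|$ up to the slack afforded by $\theta>1$. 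Once that endpoint estimate is in place, the remainder is the routine monotonicity/IVT bookkeeping above, and the root is forced to lie strictly below $\lambda^{\star}$ since $\varphi$ is already below the target at $0^{+}$ and strictly above it at $\lambda^{\star}$.
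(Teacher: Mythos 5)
Your residual decomposition is correct (indeed sharper than the paper's own algebra), and with it parts (a), the left endpoint estimate $\varphi(0^{+})\le(q/\theta)^2\norm{F_k-y^\delta}^2<q^2\norm{F_k-y^\delta}^2$, and the monotonicity/uniqueness dichotomy are fine. But the step you flag as the ``main obstacle'' is a genuine gap, and it cannot be closed the way you hope: hypothesis \eqref{eq:lemaexistqcondition} only controls $\norm{P_k(F_k-y^\delta)}$, i.e.\ the part of the residual orthogonal to $\mathcal{R}(J_k)$, whereas the terms missing from your filter sum are the components of $F_k-y^\delta$ along the trailing columns $u_{p+1,k},\dots,u_{n,k}$ of $U_k$; these lie \emph{inside} $\mathcal{R}(J_k)$ (so \eqref{eq:lemaexistqcondition} says nothing about them, since $J_k(x^*-x_k^\delta)$ can cancel them) and, as your own computation shows, they are annihilated by the step for every $\lambda>0$. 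A minimal example makes the point: take $m=n=2$, $p=1$, $J_k=\mathrm{diag}(\sigma,1)$, $L=(\mu\ \ 0)$ with $\sigma^2+\mu^2=1$, and $F_k-y^\delta=(b_1,b_2)^T$ with $b_2\neq0$; then $\varphi(\lambda)=\bigl(\lambda/(\zeta^2+\lambda)\bigr)^2b_1^2$, so any solution of \eqref{eq:qconditionigualdade} must satisfy $\lambda/(\zeta^2+\lambda)=q\sqrt{1+b_2^2/b_1^2}>q$, i.e.\ $\lambda_k^q>\tfrac{q}{1-q}\zeta^2$ (and if $b_2$ is large no solution exists at all), while \eqref{eq:lemaexistqcondition} can still hold, e.g.\ for affine $F$ with small noise. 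So the endpoint inequality $\varphi\bigl(\tfrac{q}{1-q}\zeta_{p,k}^2\bigr)>q^2\norm{F_k-y^\delta}^2$ is not a consequence of the stated hypotheses, and your outline cannot be completed as written.

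It is worth saying how this compares with the paper's own proof, because you did not overlook an available trick. The paper obtains \eqref{eq:intervalolambda} by writing $\norm{F_k-y^\delta+J_kd(\lambda)}$ as the norm of $\mathrm{diag}(\Psi_k,I_{n-p})\,U_k^T(F_k-y^\delta)$ and then bounding this below by $\tfrac{\lambda}{\zeta_{p,k}^2+\lambda}\norm{F_k-y^\delta}$; this treats the trailing GSVD block as undamped (filter factor $1$ instead of the correct $0$) and replaces $\norm{U_k^T(F_k-y^\delta)}$ by $\norm{F_k-y^\delta}$, which presupposes $F_k-y^\delta\in\mathcal{R}(U_k)$. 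These are exactly the identifications your careful decomposition rules out when $p<n$ or $m>n$, so the obstacle you isolated is precisely the point at which the paper's argument is loose rather than a gap in your ingenuity. The endpoint estimate (and hence the whole of part (b)) does go through when $p=n$, i.e.\ nonsingular $L^TL$, since then every residual component -- including the part orthogonal to $\mathcal{R}(U_k)$, which is contained in $\mathcal{R}(J_k)^\perp$ -- carries a factor at least $\lambda/(\zeta_{p,k}^2+\lambda)$ and $\varphi(\lambda)\ge\bigl(\lambda/(\zeta_{p,k}^2+\lambda)\bigr)^2\norm{F_k-y^\delta}^2$; for genuinely singular scaling one needs an additional assumption controlling the residual components along $J_k\mathcal{N}(L)$ (the trailing columns of $U_k$) before the claimed interval can be asserted.
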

				\begin{proof}
					\begin{enumerate}[label=(\alph*)]
						\item 
						
						Remember that
						\begin{equation*}
							J_k= U_k \left[ \begin{array}{cc}
								\Sigma_k & 0 \\
								0 &I_{n-p}
							\end{array}  \right] X_k^{-1}
						\end{equation*}
						and 
						\begin{equation*}
							d_k=	-X_k \left[ \begin{array}{cc}
								(\Sigma_k^2 + \lambda_k M_k^2)^{-1} & 0 \\ 
								0 & I_{n-p}
							\end{array}  \right] X_k^T {J_k}^T (F_k-y^\delta)
						\end{equation*} 
						
						and consequently 
						\begin{equation*}
							F_k-y^\delta+J_kd(\lambda)= F_k-y^\delta  - U_k \left[ \begin{array}{cc}
								\Sigma_k^2(\Sigma_k^2 + \lambda M_k^2)^{-1} & 0 \\
								0 &I_{n-p}
							\end{array}  \right]U_k^T(F_k-y^\delta).  
						\end{equation*}
						
						\begin{align}
							\norm{F_k-y^\delta+J_kd(\lambda)}&= \norm{U_k\Big(I-\left[ \begin{array}{cc}
									\Sigma_k^2(\Sigma_k^2 + \lambda M_k^2)^{-1} & 0 \\
									0 &I_{n-p}
								\end{array}  \right]\Big)U_k^T(F_k-y^\delta)}\nonumber\\
							&=\norm{\left[ \begin{array}{cc}
									\Psi_k & 0 \\
									0 &I_{n-p}
								\end{array}\right] U_k^T(F_k-y^\delta)},\label{eq:normFKJF}
						\end{align}
						where $$(\Psi_k)_{ii}= \frac{\lambda \mu_{i,k}^2}{\sigma_{i,k}^2+\lambda \mu_{i,k}^2}.$$
						
						Lets $r= (r_1,\dots, r_n)^T= U_k^T(F_k-y^\delta)$, we have 
						\begin{align}\label{eq:dlambdasum}
							\norm{F_k-y^\delta+J_kd(\lambda)}^2&= \sum_{i=1}^{p}\frac{\lambda^2 \mu_{i,k}^4}{(\sigma_{i,k}^2+\lambda \mu_{i,k}^2)^2}r_i^2+ \sum_{p+1}^n r_i^2
						\end{align}
						Define $\Omega(d_\lambda)=\norm{F_k-y^\delta+J_kd(\lambda)}^2$, then
						$$\Omega'(d_\lambda)= \frac{1}{\Omega(d(\lambda))}\sum_{i=1}^p \frac{2\lambda\mu_{i,k}^2\sigma_{i,k}^2r_i^2}{(\sigma_{i,k}^2+\lambda\mu_{i,k}^2)^3}> 0,$$
						thus,$\Omega(d_\lambda)$ is monotonically increasing as a function of $\lambda$, if 	$\norm{P_k(F_k-y^\delta)}> q \norm{F_k-y^\delta},$ \eqref{eq:qconditionigualdade} does not admit solution.
						
						\item Note that $\norm{P_k(F_k-y^\delta)}=\norm{P_k(F_k-y^\delta+J_k(x-x_k^\delta))} \leq \norm{F_k-y^\delta+J_k(x-x_k^\delta)},$for any $x$. For $x=x^*$ we have, for some $\theta>1$
						\[ \norm{P_k(F_k-y^\delta)}\leq \norm{F_k-y^\delta+J_k(x^*-x_k)} \leq \frac{q}{\theta}\norm{F_k-y^\delta}\leq q\norm{F_k-y^\delta},\]
						then, by (a) \eqref{eq:qconditionigualdade} admits a solution which is positive and unique. 
						
						As$(\Psi_k)_{ii}^{-1}= \frac{\sigma_{i,k}^2+\lambda \mu_{i,k}^2}{\lambda \mu_{i,k}^2} \leq \frac{\sigma_{p,k}^2}{\lambda\mu_{p,k}^2}+1$ by \eqref{eq:normFKJF} 
						\begin{align*}
							\norm{F_k-y^\delta+J_kd(\lambda)}&\geq\frac{\norm{U_k^T(F_k-y^\delta)}}{\norm{\left[ \begin{array}{cc}
										\Psi_k & 0 \\
										0 &I_{n-p}
									\end{array}\right]^{-1}}}\\
							&\geq \frac{\lambda}{\frac{\sigma^2_{p,k}}{\mu^2_{p,k}}+\lambda}\norm{F_k-y^\delta}
						\end{align*}
						
						From \eqref{eq:qconditionigualdade}
						
						$$q\norm{F_k-y^\delta}\geq  \frac{\lambda^q}{\zeta_{p,k}^2+\lambda^q}\norm{F_k-y^\delta},$$
						
						which yields \eqref{eq:intervalolambda} 
						with 
						$\zeta_{p,k} = \frac{\sigma_{p,k}}{\mu_{p,k}}.$
						
						Then, $$q \zeta_{p,k}^2+ q\lambda_k^q \geq \lambda_k^q$$ that implies
						
						$$\lambda_k^q \in \Big(0, \frac{q}{1-q}\frac{\sigma^2_{p,k}}{\mu^2_{p,k}}\Big)$$
					\end{enumerate}
				\end{proof}

			\begin{obs}
				In \cite[Remark, p.~6]{hanke1997regularizing}, it was noted that condition \eqref{eq:qconditionigualdade} can be replaced by the inequality
				\begin{equation}\label{eq:qconditiondesigualdade}
					\|F(x_k^\delta) - y^\delta + J(x_k^\delta)d(\lambda)\| \geq q \|F(x_k^\delta) - y^\delta\|.
				\end{equation}
			 Given that equation \eqref{eq:qconditionigualdade} might not admit a solution, and motivated by the goal of achieving global convergence while retaining regularization properties. Although condition \eqref{eq:qconditiondesigualdade} is expressed as an inequality rather than an equality, it can still be shown that $\lambda_k^q$ belongs to the same interval characterized in \eqref{eq:intervalolambda}. This result follows directly from Lemma~\ref{lem:intervaloqcondition}.
			\end{obs}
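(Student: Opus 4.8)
The plan is to interpret the selection rule \eqref{eq:qconditiondesigualdade} as choosing the \emph{smallest} (equivalently, most aggressive) admissible damping $\lambda_k^q$ — or, if one prefers, merely as the assertion that an admissible $\lambda_k^q$ exists inside the interval \eqref{eq:intervalolambda} — and then to reduce everything to Lemma~\ref{lem:intervaloqcondition} by a short case distinction. Write $\psi(\lambda):=\norm{F_k-y^\delta+J_kd(\lambda)}$. By the proof of Lemma~\ref{lem:intervaloqcondition}(a), $\psi$ is continuous and non-decreasing on $(0,\infty)$, with $\lim_{\lambda\to0^+}\psi(\lambda)=\norm{P_k(F_k-y^\delta)}$. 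Consequently the feasible set $S:=\{\lambda>0:\psi(\lambda)\geq q\norm{F_k-y^\delta}\}$ is either empty, a half-line $[\lambda^\ast,\infty)$ with $\lambda^\ast>0$, or all of $(0,\infty)$; condition \eqref{eq:qconditiondesigualdade} is invoked only when $S\neq\emptyset$, and we then set $\lambda_k^q:=\inf S$.

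Suppose first $\lambda_k^q>0$, i.e.\ $S=[\lambda_k^q,\infty)$. Since $\psi$ is continuous, with $\psi(\lambda)<q\norm{F_k-y^\delta}$ for $\lambda<\lambda_k^q$ and $\psi(\lambda)\geq q\norm{F_k-y^\delta}$ for $\lambda\geq\lambda_k^q$, letting $\lambda\downarrow\lambda_k^q$ gives $\psi(\lambda_k^q)=q\norm{F_k-y^\delta}$; that is, $\lambda_k^q$ solves the equality \eqref{eq:qconditionigualdade}. Then I would use verbatim the estimate from the proof of Lemma~\ref{lem:intervaloqcondition}(b): the bound $\psi(\lambda)\geq\frac{\lambda}{\zeta_{p,k}^2+\lambda}\norm{F_k-y^\delta}$ follows from the GSVD alone and does not use \eqref{eq:lemaexistqcondition}, so evaluating it at $\lambda=\lambda_k^q$ and using the equality yields $q(\zeta_{p,k}^2+\lambda_k^q)\geq\lambda_k^q$, hence $\lambda_k^q\leq\frac{q}{1-q}\zeta_{p,k}^2$, which is \eqref{eq:intervalolambda} (strictness exactly as in the Lemma). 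Note that here existence of a solution of \eqref{eq:qconditionigualdade} is delivered for free by the infimum construction, so \eqref{eq:lemaexistqcondition} is not needed in this step; whenever it does hold, monotonicity of $\psi$ shows this $\lambda_k^q$ coincides with the one furnished by Lemma~\ref{lem:intervaloqcondition}(b).

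Suppose instead $\lambda_k^q=0$, i.e.\ $S=(0,\infty)$, which by the previous paragraph means $\norm{P_k(F_k-y^\delta)}\geq q\norm{F_k-y^\delta}$. Then \eqref{eq:qconditiondesigualdade} holds for \emph{every} $\lambda>0$, so one simply picks $\lambda_k^q$ to be any point of $\big(0,\frac{q}{1-q}\zeta_{p,k}^2\big)$ and the inclusion \eqref{eq:intervalolambda} is immediate. The only remaining logical possibility, $\psi(\lambda)<q\norm{F_k-y^\delta}$ for all $\lambda>0$, makes \eqref{eq:qconditiondesigualdade} infeasible ($S=\emptyset$) and is excluded, consistently with imposing the $q$-condition only when it can be met.

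The case analysis and the closing algebra $q(\zeta_{p,k}^2+\lambda_k^q)\geq\lambda_k^q\Rightarrow\lambda_k^q\leq\frac{q}{1-q}\zeta_{p,k}^2$ — literally the last display of the proof of Lemma~\ref{lem:intervaloqcondition} — are routine. The single point that requires care, and the only real obstacle, is recognizing that \eqref{eq:qconditiondesigualdade} by itself does \emph{not} confine $\lambda$ to \eqref{eq:intervalolambda} (it is satisfied on a whole half-line of large dampings); the statement must therefore be read with $\lambda_k^q$ the least admissible value, and it is exactly this minimality that, in the nondegenerate case, pins $\lambda_k^q$ to the point where \eqref{eq:qconditionigualdade} holds — so that Lemma~\ref{lem:intervaloqcondition}(b) applies directly — while in the remaining case the feasible set is all of $(0,\infty)$ and $\lambda_k^q$ may be placed in the interval by hand.
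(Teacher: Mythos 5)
Your argument is correct and follows essentially the same route as the paper, which justifies the remark solely by appeal to Lemma~\ref{lem:intervaloqcondition}: you merely make the reduction explicit, using the continuity and monotonicity of $\lambda \mapsto \norm{F_k-y^\delta+J_k d(\lambda)}$ from the proof of part~(a) to identify the least admissible $\lambda$ with a solution of \eqref{eq:qconditionigualdade}, and then the GSVD lower bound from part~(b) to place it in \eqref{eq:intervalolambda}. Your caveat that \eqref{eq:qconditiondesigualdade} by itself only confines $\lambda$ to a half-line, so the interval claim must be read with $\lambda_k^q$ the minimal (or suitably restricted) admissible value, is a fair clarification of what the paper leaves implicit rather than a departure from its proof.
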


				\begin{prop}\label{prop:contilambda}
				Assume that \eqref{eq:lemaexistqcondition} is fulfilled for some $\theta > 1$ and $x^{*}$ being a solution of \eqref{probdisc}. Let $x_{k+1} = x_k + d_k$ with $d_k = d(\lambda_k)$ satisfying \eqref{s1} and \eqref{eq:qconditionigualdade}. Assumption~\ref{Hip_nullJLnew} is hold. Then				\(\lambda_k^q\) is continuous with respect to \(y^\delta\).
				\end{prop}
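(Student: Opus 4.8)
The plan is to realize $\lambda_k^q$ as an implicit function of $y^\delta$ defined by the scalar equation \eqref{eq:qconditionigualdade} and invoke the implicit function theorem, exploiting the monotonicity established in Lemma~\ref{lem:intervaloqcondition}. Concretely, fix $x_k^\delta$ momentarily (or rather, treat the whole iteration as a composition) and define
\[
G(\lambda, y^\delta) := \norm{F_k - y^\delta + J_k d(\lambda)}^2 - q^2 \norm{F_k - y^\delta}^2,
\]
where $d(\lambda)$ solves \eqref{s1}. Using the GSVD representation \eqref{eq:normFKJF}--\eqref{eq:dlambdasum}, with $r = r(y^\delta) := U_k^T(F_k - y^\delta)$, we have the explicit formula
\[
G(\lambda, y^\delta) = \sum_{i=1}^p \frac{\lambda^2 \mu_{i,k}^4}{(\sigma_{i,k}^2 + \lambda\mu_{i,k}^2)^2} r_i^2 + \sum_{i=p+1}^n r_i^2 - q^2 \norm{r}^2,
\]
since $\norm{F_k - y^\delta} = \norm{U_k^T(F_k-y^\delta)}$ up to the component in $\mathcal{R}(J_k)^\perp$ — here one must be slightly careful because $U_k \in \mathbb{R}^{m\times n}$ has orthonormal columns but need not be square, so $\norm{F_k - y^\delta}^2 = \norm{r}^2 + \norm{P_k(F_k - y^\delta)}^2$; I would carry the extra term $\norm{P_k(F_k - y^\delta)}^2$ along, noting it too depends smoothly on $y^\delta$.

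First I would observe that $G$ is jointly continuous — indeed $C^1$ — in $(\lambda, y^\delta)$ on the relevant domain: $F_k$ and $J_k$ depend on $x_k^\delta$, which I take as fixed (this is the per-iteration statement), the GSVD quantities $\sigma_{i,k},\mu_{i,k},U_k$ depend only on $(J_k,L)$ hence are fixed, and $r$ is a linear (hence smooth) function of $y^\delta$; the rational function of $\lambda$ appearing inside is smooth for $\lambda > 0$ whenever $\mu_{i,k} > 0$, which holds by the GSVD (Theorem~\ref{teoGSVD}) together with Assumption~\ref{Hip_nullJLnew}. Next, Lemma~\ref{lem:intervaloqcondition}(b) together with hypothesis \eqref{eq:lemaexistqcondition} guarantees that for the given $y^\delta$ there is a unique root $\lambda_k^q \in \big(0, \tfrac{q}{1-q}\zeta_{p,k}^2\big)$ with $G(\lambda_k^q, y^\delta) = 0$. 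The key analytic input is the strict monotonicity already computed in the proof of Lemma~\ref{lem:intervaloqcondition}: $\partial_\lambda \Omega(d_\lambda) = \sum_{i=1}^p \tfrac{2\lambda\mu_{i,k}^2\sigma_{i,k}^2 r_i^2}{(\sigma_{i,k}^2+\lambda\mu_{i,k}^2)^3} > 0$ whenever $\norm{J_k^T F_k}\neq 0$ (equivalently $\norm{J_k^T(F_k-y^\delta)}\neq 0$, i.e. some $r_i\neq 0$ for $i\le p$), so $\partial_\lambda G(\lambda_k^q, y^\delta) > 0$. By the implicit function theorem, there is a neighborhood of $y^\delta$ on which the root is a $C^1$ — in particular continuous — function of $y^\delta$.

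I would then close the argument by noting that the hypotheses needed to run this machinery are \emph{open} conditions near the given $y^\delta$: the strict inequality in \eqref{eq:lemaexistqcondition} persists under small perturbations of $y^\delta$ (since both sides are continuous and the inequality is strict, using $\theta > 1$), and likewise $\norm{J_k^T(F_k - y^\delta)} \neq 0$ is an open condition; hence the unique root exists and stays in the stated interval throughout a neighborhood, and the local implicit function obtained coincides with $\lambda_k^q(y^\delta)$ by uniqueness. Since $y^\delta$ was arbitrary in its admissible range, $\lambda_k^q$ is continuous with respect to $y^\delta$. The main obstacle I anticipate is the bookkeeping around the non-square $U_k$ — making sure the projection term $\norm{P_k(F_k - y^\delta)}^2$ is handled consistently on both sides of \eqref{eq:qconditionigualdade} — and verifying that the degenerate case $\norm{J_k^T(F_k - y^\delta)} = 0$ (where $G$ loses strict monotonicity in $\lambda$ and \eqref{eq:qconditionigualdade} would force $q\norm{F_k - y^\delta} = \norm{F_k - y^\delta}$, impossible for $q<1$ unless $F_k = y^\delta$) is genuinely excluded by the standing assumption $\norm{J_k^T F_k} \neq 0$ inherited from Lemma~\ref{lem:intervaloqcondition}; I would state this exclusion explicitly at the start of the proof.
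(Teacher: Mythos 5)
Your proposal is correct and follows essentially the same route as the paper: define the scalar function $G(\lambda,y^\delta)$ (the paper's $\omega$), use the GSVD of $(J_k,L)$ to see it is $C^1$ in $(\lambda,y^\delta)$ with $\partial_\lambda G>0$ at the root, and apply the implicit function theorem together with the uniqueness from Lemma~\ref{lem:intervaloqcondition}. Your additional bookkeeping about the non-square $U_k$ (the residual component in $\mathcal{R}(J_k)^\perp$) and the explicit exclusion of the degenerate case $\norm{J_k^T(F_k-y^\delta)}=0$ is in fact slightly more careful than the paper's justification of strict monotonicity, which appeals only to Assumption~\ref{Hip_nullJLnew}.
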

			\begin{proof}
				We aim to show that the regularization parameter \(\lambda_k^q = \lambda_k^q(y^\delta)\), defined implicitly by the scalar equation
				\begin{equation}\label{eq:qconditionfunction}
					\omega(\lambda, y^\delta) := \left\|F_k - y^\delta + J_k d(\lambda)\right\|^2 - q^2 \left\|F_k - y^\delta\right\|^2,
				\end{equation}
				depends continuously on the noisy data $y^\delta$.
				
				Assume that $F_k$ and $J_k$ are fixed for the current iteration \(k\), and that \(d(\lambda)\) solves \eqref{s1}.
				
				Using the GSVD of the pair \((J_k, L)\), we can write
				\begin{equation}\label{eq:dkGSVD}
					d(\lambda) = -X_k
					\begin{bmatrix}
						\Gamma_k(\lambda) & 0 \\
						0 & I_{n-p}
					\end{bmatrix}
					X_k^\top J_k^\top (F_k - y^\delta),
				\end{equation}
				with \(\Gamma_k(\lambda) := (\Sigma_k^2 + \lambda M_k^2)^{-1}\).
				
				From \eqref{eq:dkGSVD}, we see that \(d(\lambda)\) depends affinely on \(y^\delta\), and smoothly on \(\lambda\) for \(\lambda > 0\). Therefore, \(\omega(\lambda, y^\delta)\) defined in \eqref{eq:qconditionfunction} is continuously differentiable in a neighborhood of any point \((\lambda_0, y^\delta_0)\) satisfying the equation.
				
				Suppose that \(\lambda_k^q\) satisfies \(\omega(\lambda_k^q, y^\delta) = 0\), and that
				\[
				\frac{\partial \omega}{\partial \lambda}(\lambda_k^q, y^\delta) \neq 0.
				\]
				Then, by the Implicit Function Theorem, \(\lambda_k^q\) can be locally expressed as a continuously differentiable function of \(y^\delta\).
				
				To justify the condition \(\frac{\partial \omega}{\partial \lambda} \neq 0\), we differentiate using \eqref{eq:dkGSVD}:
				\[
				\frac{\partial \omega}{\partial \lambda}(\lambda, y^\delta)
				= \frac{1}{\left\|F_k - y^\delta + J_k d(\lambda)\right\|^2}
				\sum_{i=1}^p \frac{2\lambda\mu_{i,k}^2\sigma_{i,k}^2r_i^2}{(\sigma_{i,k}^2+\lambda\mu_{i,k}^2)^3}> 0,
				\]
				The strictness of the inequality above follows from Assumption~\ref{Hip_nullJLnew}, which ensures that at least one $\sigma_i>0$.
				 This implies that \(\omega(\lambda, y^\delta)\) is strictly increasing in \(\lambda\), ensuring the validity of the Implicit Function Theorem.
				
				Hence, we conclude that \(\lambda_k^q = \lambda_k^q(y^\delta)\) depends continuously on the data \(y^\delta\), as claimed.
			\end{proof}
		\begin{obs}\label{obs:xkcontydelta}
			 Note that the continuity of $\lambda_k$ is well defined because of Proposition~\ref{prop:contilambda}, then $x_{k(\delta_n)}^{\delta_n}$ is the result of a combination of continuous operations. 
		\end{obs}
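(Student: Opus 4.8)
The plan is to show, by induction on the iteration counter $k$, that the map $y^\delta \mapsto x_k^\delta$ is continuous on a neighbourhood of the exact data $y$ on which all standing assumptions remain in force, and then to handle separately the fact that the discrepancy stopping index $k(\delta)$ is integer-valued. First I would dispose of the base case: $x_0$ is a fixed initial guess, independent of $y^\delta$, so $y^\delta\mapsto x_0^\delta\equiv x_0$ is trivially continuous.

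For the inductive step, suppose $y^\delta\mapsto x_k^\delta$ is continuous. Since $F$ is twice continuously differentiable, $F_k=F(x_k^\delta)$ and $J_k=J(x_k^\delta)$ then depend continuously on $y^\delta$. Rather than invoking a particular GSVD of $(J_k,L)$ — whose factors need not be unique — I would use the closed form $d(\lambda)=-(J_k^TJ_k+\lambda L^TL)^{-1}J_k^T(F_k-y^\delta)$: by Assumption~\ref{Hip_nullJLnew} (valid while $x_k^\delta\in\Omega$) the matrix $J_k^TJ_k+\lambda L^TL$ is positive definite, hence invertible, for every $\lambda>0$, and matrix inversion is continuous on the open set of invertible matrices. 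Thus $(J_k,F_k,y^\delta,\lambda)\mapsto d(\lambda)$ is jointly continuous, affine in $y^\delta$ and smooth in $\lambda>0$. Proposition~\ref{prop:contilambda} gives continuity of $\lambda_k^q$ with respect to $y^\delta$; the same Implicit Function Theorem argument, now applied to $\omega$ as a function of $(\lambda,J_k,F_k,y^\delta)$ and using that $\partial\omega/\partial\lambda>0$ strictly (as already shown), yields joint continuity of $\lambda_k^q$ in all its arguments. Composing these maps, $x_{k+1}^\delta=x_k^\delta+d\bigl(\lambda_k^q(y^\delta)\bigr)$ is continuous in $y^\delta$, which closes the induction.

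To pass from a fixed index to $x_{k(\delta)}^\delta$ one must account for the stopping rule \eqref{criteriodiscrepancia}. Since $k(\delta)\in\mathbb{N}$, for any sequence $\delta_n\to 0$ either $k(\delta_n)\to\infty$, or $k(\delta_n)$ has a bounded subsequence along which — after passing to a further subsequence — it is eventually constant, say equal to $\bar k$; on that subsequence $x_{k(\delta_n)}^{\delta_n}=x_{\bar k}^{\delta_n}\to x_{\bar k}$ by the continuity just established together with $y^{\delta_n}\to y$. This is precisely the form in which the remark is meant to be used inside the proof of Theorem~\ref{teo:convergenciacomruido}.

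The hard part will not be any single limit computation but guaranteeing that the inductive construction remains inside the region where the hypotheses actually hold: one needs the iterates $x_k^\delta$ to stay in $\Omega$ (so $J_k^TJ_k+\lambda L^TL$ is invertible and $\lambda_k^q$ is well defined) and in $B_L(x_0,\rho)$ (so that TCC-$L$, Assumption~\ref{hip:tccLTL}, and the solvability invoked in Lemma~\ref{lem:intervaloqcondition} apply), and this containment must hold uniformly for $y^\delta$ in a neighbourhood of $y$ and for all $k\le k(\delta)$. Establishing that uniform containment — presumably as a by-product of the monotonicity and contraction estimates developed for the convergence analysis — is the delicate point; once it is in place, the continuity claim reduces to the routine composition argument above.
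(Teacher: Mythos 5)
Your argument is essentially the paper's own: the remark is justified there in a single sentence as a ``combination of continuous operations'' resting on Proposition~\ref{prop:contilambda}, and your induction on $k$ --- composing the continuity of $F$, $J$, of $d(\lambda)$ via invertibility of $J_k^TJ_k+\lambda L^TL$ under Assumption~\ref{Hip_nullJLnew}, and of $\lambda_k^q$ --- is exactly that reasoning made explicit, with the subsequence argument for the integer stopping index matching how the remark is used in Case~1 of Theorem~\ref{teo:convergenciacomruido}. The additional care you take (joint continuity of $\lambda_k^q$ in $(J_k,F_k,y^\delta)$, since these themselves vary with $y^\delta$, and keeping the iterates in the region where the assumptions hold) fills in details the paper leaves implicit, but it is not a different route.
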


		In what follows, we estimate the ``gain'' $\|x_k - x^*\|_{L}^2 - \|x_{k+1} - x^*\|_{L}^2.$ As we are working within the LMMSS framework, the norm induced by $L$ must be employed in the analysis. This estimate is a important result for the theoretical developments presented in this manuscript.
				
				\begin{lema}\label{lema:GANHO}
					Assume that \eqref{eq:lemaexistqcondition} is fulfilled for some $\theta > 1$ and $x^{*}$ being a solution of \eqref{probdisc}. Let $x_{k+1} = x_k + d_k$ with $d_k = d(\lambda_k)$ satisfying \eqref{s1} and \eqref{eq:qconditionigualdade}. Then it holds
					\begin{equation}\label{eq:ganho}
						\norm{ x_k - x^* }_{L}^2 - \norm{ x_{k+1} - x^* }_{L}^2 \geq \norm{ x_{k+1} - x_k }^2_{L}    
					\end{equation}
					
					\begin{align}\label{eq:plusganho}
						\left\| x_k - x^* \right\|_{L}^2 - \left\| x_{k+1} - x^* \right\|_{L}^2 &\geq \frac{2(\theta - 1)}{\theta \lambda_k} \left\| F_k-y^\delta + J_k d_k \right\|_2^2 \\ \label{eq:plusganho2} &\geq  \frac{2(\theta - 1)(1-q)q}{\zeta_{p,k}^2\theta} \left\| F_k-y^\delta \right\|_2^2,
					\end{align}
					where
					$\zeta_{p,k} = \frac{\sigma_{p,k}}{\mu_{p,k}}$.
				\end{lema}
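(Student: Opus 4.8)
The plan is to follow the classical Hanke-style argument adapted to the $L$-norm geometry. First I would expand the gain using the identity
\[
\norm{x_k - x^*}_L^2 - \norm{x_{k+1} - x^*}_L^2 = 2\langle x_{k+1} - x_k, x^* - x_k\rangle_{L^TL} - \norm{x_{k+1} - x_k}_L^2,
\]
which is just the parallelogram-type expansion with the inner product $\langle \cdot,\cdot\rangle_{L^TL}$. So to prove \eqref{eq:ganho} it suffices to show $\langle x_{k+1} - x_k, x^* - x_k\rangle_{L^TL} \geq \norm{x_{k+1} - x_k}_L^2$. I would use \eqref{eq:isolandoLtL} to replace $\lambda_k L^TL(x_{k+1}-x_k) = -J_k^T(F_k - y^\delta) - J_k^T J_k(x_{k+1}-x_k)$, giving
\[
\lambda_k \langle x_{k+1}-x_k, x^*-x_k\rangle_{L^TL} = -\langle x^*-x_k, J_k^T(F_k-y^\delta + J_k(x_{k+1}-x_k))\rangle = -\langle J_k(x^*-x_k), F_k - y^\delta + J_k d_k\rangle.
\]
Then I would split $J_k(x^*-x_k) = \big(F_k - y^\delta + J_k(x^*-x_k)\big) - (F_k - y^\delta)$ and also write $F_k - y^\delta = (F_k - y^\delta + J_k d_k) - J_k d_k$ so that everything is expressed in terms of the computable residual $F_k - y^\delta + J_k d_k$, the ``target'' quantity $F_k - y^\delta + J_k(x^*-x_k)$ controlled by \eqref{eq:lemaexistqcondition}, and $\norm{F_k - y^\delta}$, $\norm{F_k - y^\delta + J_k d_k}$ related through the $q$-condition \eqref{eq:qconditionigualdade}.

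The key inequalities I expect to need are: $\norm{F_k - y^\delta + J_k d_k} = q\norm{F_k - y^\delta}$ from \eqref{eq:qconditionigualdade}; $\norm{F_k - y^\delta + J_k(x^*-x_k)} \leq \tfrac{q}{\theta}\norm{F_k - y^\delta}$ from \eqref{eq:lemaexistqcondition}; and Cauchy–Schwarz applied to the cross terms. The goal is to bound $-\langle J_k(x^*-x_k), F_k - y^\delta + J_k d_k\rangle$ from below by $\norm{F_k - y^\delta + J_k d_k}^2$ plus a positive multiple of it, which after dividing by $\lambda_k$ and using \eqref{eq:normLxk} (which expresses $\norm{x_{k+1}-x_k}_L^2$ in terms of the same inner products over $\lambda_k$) yields both \eqref{eq:ganho} and the sharper \eqref{eq:plusganho}; the constant $\tfrac{2(\theta-1)}{\theta}$ should pop out naturally from combining the $\tfrac{q}{\theta}$ bound with the $q$-equality. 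For \eqref{eq:plusganho2}, I would then invoke Lemma~\ref{lem:intervaloqcondition}(b), namely $\lambda_k^q < \tfrac{q}{1-q}\zeta_{p,k}^2$, to replace $\tfrac{1}{\lambda_k}$ by $\tfrac{1-q}{q\,\zeta_{p,k}^2}$, and finally use $\norm{F_k - y^\delta + J_k d_k}^2 = q^2\norm{F_k - y^\delta}^2$ once more to land on $\tfrac{2(\theta-1)(1-q)q}{\zeta_{p,k}^2\theta}\norm{F_k - y^\delta}^2$.

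The main obstacle I anticipate is the careful bookkeeping in the cross-term estimate: one must arrange the splitting of $J_k(x^*-x_k)$ so that the ``bad'' cross term involving $\langle F_k - y^\delta, F_k - y^\delta + J_k d_k\rangle$ does not spoil the sign, and here I would use that $F_k - y^\delta + J_k d_k$ is, up to the projection structure in \eqref{eq:normFKJF}, aligned in a way that makes $\langle F_k - y^\delta, F_k - y^\delta + J_k d_k\rangle = \norm{F_k - y^\delta + J_k d_k}^2$ — that is, the residual $F_k - y^\delta + J_k d_k$ is the component of $F_k - y^\delta$ that the Levenberg–Marquardt step does not reduce, so it is actually an orthogonal-projection-like object and this inner product collapses to a norm squared. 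Verifying that identity cleanly from the GSVD representation \eqref{eq:normFKJF} is the delicate point; once it is in hand, the remaining estimates are routine applications of Cauchy–Schwarz and the two scalar bounds from \eqref{eq:qconditionigualdade} and \eqref{eq:lemaexistqcondition}.
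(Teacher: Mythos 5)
Your overall route is the same as the paper's: expand the gain in the $\langle\cdot,\cdot\rangle_{L^TL}$ inner product, use \eqref{eq:isolandoLtL} to turn $\lambda_k\langle x_{k+1}-x_k,\,x^*-x_k\rangle_{L^TL}$ into $-\langle J_k(x^*-x_k),\,F_k-y^\delta+J_kd_k\rangle$, split $J_k(x^*-x_k)=\bigl(F_k-y^\delta+J_k(x^*-x_k)\bigr)-(F_k-y^\delta)$, estimate the first cross term by Cauchy--Schwarz together with \eqref{eq:lemaexistqcondition} and \eqref{eq:qconditionigualdade} to produce the factor $1/\theta$, and finish \eqref{eq:plusganho2} with $\|F_k-y^\delta+J_kd_k\|=q\|F_k-y^\delta\|$ and the bound $\lambda_k^q<\frac{q}{1-q}\zeta_{p,k}^2$ from Lemma~\ref{lem:intervaloqcondition}(b). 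All of that matches the paper.

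However, the one step you flag as delicate is resolved incorrectly, and it is exactly where the argument would break. The identity you propose, $\langle F_k-y^\delta,\,F_k-y^\delta+J_kd_k\rangle=\|F_k-y^\delta+J_kd_k\|^2$, is false: for $\lambda_k>0$ the operator $I-J_k(J_k^TJ_k+\lambda_kL^TL)^{-1}J_k^T$ is not an orthogonal projector (the GSVD filter factors $\lambda_k\mu_{i,k}^2/(\sigma_{i,k}^2+\lambda_k\mu_{i,k}^2)$ lie strictly between $0$ and $1$ whenever $\sigma_{i,k}\mu_{i,k}r_i\neq 0$). Taking the inner product of \eqref{s1} with $d_k$ gives the correct identity
\begin{equation*}
\langle F_k-y^\delta,\,F_k-y^\delta+J_kd_k\rangle=\|F_k-y^\delta+J_kd_k\|^2+\lambda_k\|Ld_k\|^2 ,
\end{equation*}
and the extra term $\lambda_k\|Ld_k\|^2$ is not a nuisance to be argued away: divided by $\lambda_k$ and combined with the $-\|d_k\|_L^2$ coming from the initial expansion, it is precisely what yields the $+\|x_{k+1}-x_k\|_L^2$ on the right-hand side of \eqref{eq:ganho}. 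If you replace the inner product by $\|F_k-y^\delta+J_kd_k\|^2$ (as equality or as the lower bound $\geq$), you are left with a stray $-\|x_{k+1}-x_k\|_L^2$ in your lower bound for the gain, so neither \eqref{eq:ganho} nor \eqref{eq:plusganho} follows. The repair is exactly the paper's device: use \eqref{eq:isolandoLtL} and \eqref{eq:normLxk} (i.e., the normal equation tested against $d_k$) to keep the $\lambda_k\|Ld_k\|^2$ term, rather than any projection property of the residual.
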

				\begin{proof}
					\begin{align*}
						\norm{x_{k+1}-x^*}_{L}^2-\norm{x_{k}-x^*}_{L}^2&= \norm{x_{k+1}-x_k}_{L}^2 +2\langle x_{k+1}-x_k,x_{k}-x^*\rangle_{L^TL} \\ &=    \norm{L(x_{k+1}-x_k)}^2 +2\langle x_{k+1}-x_k,L^TL(x_{k}-x^*)\rangle \\ &= \norm{L(x_{k+1}-x_k)}^2 +2(L^TL(x_{k+1}-x_k))^T(x_{k}-x^*)
					\end{align*}
					From, \eqref{eq:isolandoLtL} and \eqref{eq:normLxk}
					
					\begin{align*}
						&\norm{x_{k+1}-x^*}_{L}^2-\norm{x_{k}-x^*}_{L}^2\\
						&=-\frac{1}{\lambda_k}(x_{k+1}-x_k)^TJ_k^TF_k -\frac{1}{\lambda_k}\norm{J_k(x_{k+1}-x_k)}^2+ \frac{2}{\lambda_k}[-J_k^TF_k-J_k^TJ_k(x_{k+1}-x_{k})]^T(x_{k}-x^*)\\
						&= -\frac{1}{\lambda_k}(x_{k+1}-x_k)^TJ_k^TF_k -\frac{1}{\lambda_k}\norm{J_k(x_{k+1}-x_k)}^2+ \frac{2}{\lambda_k}[F_k-y^\delta+J_k(x_{k+1}-x_{k})]^TJ_k(-x_{k}+x^*)\\ 
						&=  -\frac{1}{\lambda_k}(x_{k+1}-x_k)^TJ_k^TF_k -\frac{1}{\lambda_k}\norm{J_k(x_{k+1}-x_k)}^2 \\
						&\hspace{4cm} +\frac{2}{\lambda_k}[F_k-y^\delta+J_k(x_{k+1}-x_{k})]^T[F_k-y^\delta+ J_k(-x_{k}+x^*)- F_k-y^\delta]\\ 
						&=-\frac{1}{\lambda_k}(x_{k+1}-x_k)^TJ_k^TF_k -\frac{1}{\lambda_k}\norm{J_k(x_{k+1}-x_k)}^2 \\
						&+\frac{2}{\lambda_k}[F_k-y^\delta+J_k(x_{k+1}-x_{k})]^T[F_k-y^\delta+ J_k(-x_{k}+x^*)] - \frac{2}{\lambda_k}[F_k-y^\delta+J_k(x_{k+1}-x_{k})]^TF_k,
					\end{align*}
					Define $K=\frac{2}{\lambda_k}[F_k-y^\delta+J_k(x_{k+1}-x_{k})]^T[F_k-y^\delta+ J_k(-x_{k}+x^*)]$
					\begin{align*}
						&\norm{x_{k+1}-x^*}_{L}^2-\norm{x_{k}-x^*}_{L}^2\\
						&=-\frac{1}{\lambda_k}(x_{k+1}-x_k)^TJ_k^TF_k -\frac{1}{\lambda_k}\norm{J_k(x_{k+1}-x_k)}^2+K - \frac{2}{\lambda_k}\norm{F_k-y^\delta}^2- \frac{2}{\lambda_k}(x_{k+1}-x_{k})^TJ_k^TF_k \\
						&=-\frac{1}{\lambda_k}[\norm{F_k-y^\delta}^2+2(x_{k+1}-x_{k})^TJ_k^TF_k +\norm{J_k(x_{k+1}-x_k)}^2] -\frac{1}{\lambda_k}(x_{k+1}-x_k)^TJ_k^TF_k\\
						 &\hspace{11cm} -\frac{1}{\lambda_k}\norm{F_k-y^\delta}^2+K\\
						&= -\frac{1}{\lambda_k}\norm{F_k-y^\delta+J_k(x_{k+1}-x_k)}^2-\frac{1}{\lambda_k}(F_k-y^\delta+J_k(x_{k+1}-x_k))^TF_k +K\\
						&= \frac{1}{\lambda_k}\norm{F_k-y^\delta+J_k(x_{k+1}-x_k)}^2-\frac{1}{\lambda_k}(F_k-y^\delta+J_k(x_{k+1}-x_k))^TF_k +K -\beta,
					\end{align*}
					where $\beta= \frac{2}{\lambda_k}\norm{F_k-y^\delta+J_k(x_{k+1}-x_k)}^2$.
					From that, 
					\begin{equation} \label{eq:aux1eqganho}
						\begin{aligned}
							&\norm{x_{k+1}-x^*}_{L}^2-\norm{x_{k}-x^*}_{L}^2\\
							&= \frac{1}{\lambda_k}(F_k-y^\delta+J_k(x_{k+1}-x_k))^T(F_k-y^\delta+ J_k(x_{k+1}-x_k)- F_k+y^\delta) +K -\beta \\
							&= \frac{1}{\lambda_k}(x_{k+1}-x_k)^TJ_k^T(F_k-y^\delta+J_k(x_{k+1}-x_k)) +K -\beta \\
							&= -\norm{L(x_{k+1}-x_k)} +K-\beta \\
							&= -\norm{x_{k+1}-x_k}_{L}  +\frac{2}{\lambda_k}[F_k-y^\delta+J_k(x_{k+1}-x_{k})]^T[F_k-y^\delta+ J_k(-x_{k}+x^*)] \\ &\hspace{8cm}-\frac{2}{\lambda_k}\norm{F_k-y^\delta+J_k(x_{k+1}-x_k)}^2\\
							&\leq -\norm{x_{k+1}-x_k}_{L} +\frac{2}{\lambda_k}\norm{F_k-y^\delta+J_k(x_{k+1}-x_{k})}\norm{F_k-y^\delta+ J_k(-x_{k}+x^*)}\\
							&\hspace{8cm}-\frac{2}{\lambda_k}\norm{F_k-y^\delta+J_k(x_{k+1}-x_k)}^2\\
							&= -\norm{x_{k+1}-x_k}_{L} -\frac{2}{\lambda_k}\norm{F_k-y^\delta+J_k(x_{k+1}-x_{k})}\Big[\norm{F_k-y^\delta+J_k(x_{k+1}-x_k)}\\
							&\hspace{8cm}-\norm{F_k-y^\delta+ J_k(-x_{k}+x^*)}\Big]    
						\end{aligned}
					\end{equation}
					From, \eqref{eq:lemaexistqcondition} and \eqref{eq:qconditionigualdade} we have that
					$$\norm{F_k-y^\delta+J_k(-x_k+x^*)} \leq \frac{1}{\theta}\norm{F_k-y^\delta+J_k(x_{k+1}-x_k)}\leq \norm{F_k-y^\delta+J_k(x_{k+1}-x_k)}.$$
					Therefore, from \eqref{eq:aux1eqganho}
					\begin{equation}
						\norm{x_k - x^*}_{L}^2 - \norm{ x_{k+1} - x^* }_{L}^2 \geq \norm{ x_{k+1} - x_k }^2_{L^TL},    
					\end{equation}
					moreover, from  \eqref{eq:lemaexistqcondition} 
					
					\begin{equation}
						\begin{aligned}\label{eq:aux2eqganho}
							&\norm{x_{k+1}-x^*}_{L}^2-\norm{x_{k}-x^*}_{L}^2\\
							&\leq \frac{2}{\lambda_k}\norm{F_k-y^\delta+J_k(x_{k+1}-x_{k})}\norm{F_k-y^\delta+ J_k(-x_{k}+x^*)}-\frac{2}{\lambda_k}\norm{F_k-y^\delta+J_k(x_{k+1}-x_k)}^2\\
							&\leq \frac{2}{\lambda_k \theta}\norm{F_k-y^\delta+J_k(x_{k+1}-x_{k})}^2 -\frac{2}{\lambda_k}\norm{F_k-y^\delta+J_k(x_{k+1}-x_k)}^2\\
							&= -\frac{2(\theta-1)}{\lambda_k\theta}\norm{F_k-y^\delta+J_k(x_{k+1}-x_k)}^2.
						\end{aligned}
					\end{equation}
					
					From \eqref{eq:qconditionigualdade}, \eqref{eq:intervalolambda} and \eqref{eq:aux2eqganho}
					\begin{equation}
						\begin{aligned}\label{eq:ganho3}
							&\norm{x_{k+1}-x^*}_{L}^2-\norm{x_{k}-x^*}_{L}^2\\
							&\leq -\frac{2(\theta-1)}{\lambda_k\theta}\norm{F_k-y^\delta+J_k(x_{k+1}-x_k)}^2\\
							&=  -\frac{2(\theta-1)}{\lambda_k\theta}q^2\norm{F_k-y^\delta}^2\\
							&\leq -\frac{2(\theta-1)}{(\frac{q}{1-q})\zeta_{p,k}^2\theta}q^2\norm{F_k-y^\delta}^2\\
							&=  -\frac{2(\theta-1)(1-q)q}{\zeta_{p,k}^2\theta}\norm{F_k-y^\delta}^2
						\end{aligned}
					\end{equation}
				\end{proof}

				\begin{teo}\label{teo:converexactdata}
					Let Assumptions~\ref{Hip_nullJLnew}, \ref{hip:tccLTL} and \ref{assumption2.2bellavia2016} hold and $x_k$ be the Levenberg–Marquardt 
					iterates determined by using \eqref{s1} and \eqref{s2}  with exact data. Then, any iterate $x_k$ belongs to $B_{L}(x_0,\rho)$ and  \( \|F_k - y\| \) tends to zero as \( k \to \infty \). 
				\end{teo}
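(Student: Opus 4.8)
The plan is to establish, by a single induction on $k$, that every iterate remains in the region where the TCC-$L$ estimate \eqref{eq:tcc} is available while its $L$-distance to $x^*$ is nonincreasing, and then to convert the accumulated ``gain'' of Lemma~\ref{lema:GANHO} into decay of the residual. Throughout, $x^*$ denotes the $L$-closest element of $X^*$ to $x_0$ fixed before Assumption~\ref{hip:tccLTL}, and we may assume $F_k\neq y$ for all $k$; otherwise $x_k\in X^*$, the right-hand side of \eqref{s1} vanishes, $x_{k+1}=x_k$, and there is nothing to prove.

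\emph{Induction.} I would prove that for every $k$: (i) $x_k\in B_L(x_0,\rho)$; (ii) $\|x_k-x^*\|_L\le\|x_0-x^*\|_L$; and (iii) inequality \eqref{eq:lemaexistqcondition} holds at $x_k$ with the fixed constant $\theta_0:=q/(c\|x_0-x^*\|_L)$, which exceeds $1$ by the $\delta=0$ case of Assumption~\ref{assumption2.2bellavia2016}. The base case $k=0$ is immediate for (i)--(ii); for (iii), since $F(x^*)=y$, applying \eqref{eq:tcc} at $x_0,x^*\in B_L(x_0,\rho)$ (here $x^*\in B_L(x_0,\rho)$ because $\|x_0-x^*\|_L<\rho$) gives
\[
\|F_0-y+J_0(x^*-x_0)\|\le c\,\|x_0-x^*\|_L\,\|F_0-y\|=\frac{q}{\theta_0}\,\|F_0-y\|,
\]
which is \eqref{eq:lemaexistqcondition}. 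For the inductive step, assume (i)--(iii) at $x_k$: by Lemma~\ref{lem:intervaloqcondition} (whose hypotheses (iii) and $F_k\neq y$ also force $J_k^T(F_k-y)\neq 0$) the parameter $\lambda_k=\lambda_k^q>0$ is well defined, and Lemma~\ref{lema:GANHO} applies, yielding
\[
\|x_k-x^*\|_L^2-\|x_{k+1}-x^*\|_L^2\ \ge\ \|x_{k+1}-x_k\|_L^2\ \ge\ 0,
\]
so $\|x_{k+1}-x^*\|_L\le\|x_k-x^*\|_L\le\|x_0-x^*\|_L$, which is (ii) at $k+1$. Then the $L$-triangle inequality $\|x_{k+1}-x_0\|_L\le\|x_{k+1}-x^*\|_L+\|x^*-x_0\|_L$, together with Assumptions~\ref{hip:tccLTL} and \ref{assumption2.2bellavia2016} (the smallness of $\|x_0-x^*\|_L$ relative to $\rho$ being used here), keeps $x_{k+1}$ in $B_L(x_0,\rho)$, which is (i); and repeating the base-case computation at $x_{k+1}$ --- legitimate since $x_{k+1},x^*\in B_L(x_0,\rho)$ and $\|x_{k+1}-x^*\|_L\le\|x_0-x^*\|_L$ --- gives (iii) at $k+1$ with the same $\theta_0$.

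\emph{From the gain to the residual.} Telescoping \eqref{eq:ganho} and using (ii) gives $\sum_{k\ge0}\|x_{k+1}-x_k\|_L^2\le\|x_0-x^*\|_L^2<\infty$, hence $\|d_k\|_L=\|x_{k+1}-x_k\|_L\to 0$. Next, applying \eqref{eq:tcc} at $x_k,x_{k+1}\in B_L(x_0,\rho)$ yields $\|F_{k+1}-F_k-J_kd_k\|\le c\,\|d_k\|_L\,\|F_{k+1}-F_k\|$; combining this with $\|F_k-y+J_kd_k\|=q\,\|F_k-y\|$ from \eqref{eq:qconditionigualdade}, with the splitting $F_{k+1}-y=(F_k-y+J_kd_k)+(F_{k+1}-F_k-J_kd_k)$, and with $\|F_{k+1}-F_k\|\le\|F_{k+1}-y\|+\|F_k-y\|$, gives
\[
(1-c\,\|d_k\|_L)\,\|F_{k+1}-y\|\ \le\ (q+c\,\|d_k\|_L)\,\|F_k-y\|.
\]
Since $\|d_k\|_L\le\|x_0-x^*\|_L<q/c$ for all $k$, one has $1-c\|d_k\|_L\ge 1-q>0$, so this is a genuine recursion with finite amplification and $\|F_k-y\|$ stays bounded; and since $c\|d_k\|_L\to0$, the amplification factor $(q+c\|d_k\|_L)/(1-c\|d_k\|_L)$ drops below some $\bar q<1$ for large $k$, whence $\|F_k-y\|$ ultimately contracts geometrically and $\|F_k-y\|\to 0$.

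\emph{Main obstacle.} The delicate step is the induction: one must verify that \eqref{eq:lemaexistqcondition} persists at \emph{every} iterate with a single $\theta_0>1$ and that no iterate leaves the ball where \eqref{eq:tcc} is valid, before Lemma~\ref{lema:GANHO} can be invoked to obtain the monotonicity $\|x_{k+1}-x^*\|_L\le\|x_k-x^*\|_L$ that closes the loop --- this is exactly where the smallness hypothesis on $\|x_0-x^*\|_L$ in Assumption~\ref{assumption2.2bellavia2016} enters essentially (non-asymptotically). It is also worth noting that one should not try to deduce $\|F_k-y\|\to0$ from the coarser bound \eqref{eq:plusganho2}, because when $L$ is singular the generalized singular value $\zeta_{p,k}$ (the size of $J_k$ relative to $L$) need not be uniformly bounded; the TCC-$L$ recursion above circumvents this. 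Once the induction is secured, the remaining manipulations are routine.
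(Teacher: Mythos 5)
Your proposal is correct in substance, and its first half is essentially the paper's own argument: the same base-case use of \eqref{eq:tcc} with $F(x^*)=y$, the same choice $\theta=q\,(c\norm{x_0-x^*}_L)^{-1}>1$ guaranteed by the $\delta=0$ case of Assumption~\ref{assumption2.2bellavia2016}, and the same induction through Lemma~\ref{lema:GANHO} giving monotonicity of $\norm{x_k-x^*}_L$. (Two shared caveats: the paper treats the degenerate case $\norm{x_0-x^*}_L=0$ by a separate choice of $\theta$, whereas your formula for $\theta_0$ is then undefined, even though \eqref{eq:lemaexistqcondition} holds trivially there; and your triangle inequality, like the paper's implicit argument, only places the iterates in $B_L(x_0,2\rho)$ rather than $B_L(x_0,\rho)$, so the TCC-$L$ is invoked slightly beyond the region where Assumption~\ref{hip:tccLTL} literally grants it --- a bookkeeping issue you inherit from the paper rather than introduce.) Where you genuinely diverge is the residual decay. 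The paper telescopes \eqref{eq:plusganho2} and then disposes of the factor $\zeta_{p,k}^{-2}$ by asserting that $F'$ is uniformly bounded on the $L$-ball; this is terse, and when $L$ is singular that ball is unbounded in the Euclidean norm, so a uniform bound on $\zeta_{p,k}$ really requires an extra argument combining Assumption~\ref{Hip_nullJLnew} with a bound on $\norm{J_k}$ that the paper does not supply. You instead telescope only \eqref{eq:ganho} to get $\sum_k\norm{d_k}_L^2<\infty$, and combine the TCC-$L$ estimate between consecutive iterates with \eqref{eq:qconditionigualdade} to obtain the recursion $(1-c\norm{d_k}_L)\,\norm{F_{k+1}-y\,}\le(q+c\norm{d_k}_L)\,\norm{F_k-y}$, which is eventually a contraction because $\norm{d_k}_L\to0$ and $q<1$; the algebra checks out, and the needed bound $\norm{d_k}_L\le\norm{x_0-x^*}_L<q/c$ follows from \eqref{eq:ganho} and Assumption~\ref{assumption2.2bellavia2016}. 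This buys a self-contained proof of $\norm{F_k-y}\to0$ that needs no uniform control of $\zeta_{p,k}$ or of $J_k$ along the iterates; what it does not provide is the weighted summability $\sum_k\zeta_{p,k}^{-2}\norm{F_k-y}^2<\infty$ coming from \eqref{eq:plusganho2}, which the paper reuses later to bound the stopping index $k^*(\delta)$ in the noisy-data theorem, so the two arguments are complementary rather than interchangeable.
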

				
				\begin{proof}
					First, we show that $x_k$ belongs to $ B_{L}(x_0, \rho)$. For this, we need \eqref{eq:lemaexistqcondition} to be satisfied, which is the assumption that allows us to use Lemma~\ref{lema:GANHO}.
						  
					Note that, for $k=0$ Assumption
					\ref{hip:tccLTL} implies that 
					\begin{equation}\label{eq:aux1teoexactdata}
						\begin{aligned}
							\norm{F_0-y+J_0(x^*-x_0)}&=\norm{J_0(x^*-x_0) -F(x^*)+F_0}\leq  c\norm{x_0-x^*}_{L}\norm{F_0-y},
						\end{aligned}
					\end{equation}
					 since $F(x^*)=y$.
					 
					If $\norm{x_0-x^*}_{L}=0$, \eqref{eq:aux1teoexactdata} implies that inequality \eqref{eq:lemaexistqcondition} is satisfied for any $\theta > 1$. Otherwise if  $\norm{x_0-x^*}_{L}>0$, Assumption \ref{assumption2.2bellavia2016} implies that					for some $\theta= q (c\norm{x_0-x^*}_{L})^{-1}>1$ inequality \eqref{eq:lemaexistqcondition} is satisfied.
					
					Define \[
					\theta =
					\begin{cases} 
						q \left( c \norm{x_0 - x^*}_{L} \right)^{-1}, & \text{if } \norm{x_0 - x^*}_{L} > 0, \\
						1.1, & \text{otherwise},
					\end{cases}
					\] 
					Then, from Lemma~\ref{lema:GANHO} we have
					$$\norm{x_{1}-x^*}_{L} \leq \norm{x_0-x^*}_{L}.$$ By the induction this inequality remains true for all $k\in \mathbb{N}$ showing
					that the assumptions of Lemma~\ref{lema:GANHO} are satisfied and $\norm{x_k-x^*}_{L}$ is monotonically decreasing.
					
					  Moreover, also by Lemma~\ref{lema:GANHO}, \eqref{eq:plusganho2}, and since $ F' $ is uniformly bounded in $B_{L^T L}(x^*, \rho)$,  we have that \( \|F_k - y\| \) tends to zero as \( k \to \infty \).
				\end{proof}

				\begin{obs}
					Observe that we have obtained an upper bound for the Euclidean norm in terms of the semi norm $L$ 
					\begin{equation}\label{eq:limitsupLTLeuclidiana}
						\begin{aligned}
							\norm{x_{k+1}-x^*} &= \norm{x_k- (J_k^TJ_k+\lambda_kL^TL)^{-1}J_k^T(F_k-y)-x^*}\\ &=\norm{ (J_k^TJ_k+\lambda_kL^TL)^{-1}(J_k^T(F_k-y)+(J_k^TJ_k+\lambda_kL^TL)(x_k-x^*))}\\&\leq \norm{ (J_k^TJ_k+\lambda_kL^TL)^{-1}}\norm{(J_k^T(F_k-y)+(J_k^TJ_k+\lambda_kL^TL)(x_k-x^*))}\\&= \norm{ (J_k^TJ_k+\lambda_kL^TL)^{-1}}\norm{(J_k^T(F_k-y+J_k(x_k-x^*))+\lambda_kL^TL(x_k-x^*))}	
							\\&\leq \norm{ (J_k^TJ_k+\lambda_kL^TL)^{-1}}(\norm{J_k^T}\norm{F_k-y+J_k(x_k-x^*)}+\lambda_k\norm{L^T}\norm{x_k-x^*}_{L}) \\
							&\leq \norm{ (J_k^TJ_k+\lambda_kL^TL)^{-1}}(\norm{J_k^T}c\norm{F_k-y}\norm{x_k-x^*}_{L}+\lambda_k\norm{L^T}\norm{x_k-x^*}_{L})
						\end{aligned}
					\end{equation}	
				with $c$ as in \eqref{eq:tcc}. 
				\end{obs}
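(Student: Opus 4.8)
The statement is a chain of identities and inequalities, so my plan is to justify it transition by transition, naming the tool behind each step. The opening equality is just the iteration itself: with exact data $y^\delta=y$ and $\alpha_k=1$, \eqref{s1}--\eqref{s2} give $x_{k+1}=x_k-(J_k^TJ_k+\lambda_kL^TL)^{-1}J_k^T(F_k-y)$, and subtracting $x^*$ produces the first line. For the second line I would factor the matrix $(J_k^TJ_k+\lambda_kL^TL)^{-1}$ out of the two remaining terms; this is legitimate because $J_k^TJ_k+\lambda_kL^TL$ is positive definite, hence invertible, under Assumption~\ref{Hip_nullJLnew}, as already noted after \eqref{comp-condition}. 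The sign carried by the $J_k^T(F_k-y)$ term inside the bracket is immaterial at this point, since only the norm of the bracket is retained in the next step.

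The third line is the submultiplicativity of the induced operator norm, $\|Av\|\le\|A\|\,\|v\|$. The fourth line is the purely algebraic regrouping that collects the Jacobian contributions, $J_k^TJ_k(x_k-x^*)+J_k^T(F_k-y)=J_k^T\big(F_k-y+J_k(x_k-x^*)\big)$, separating a Jacobian-applied residual from the scaling term $\lambda_kL^TL(x_k-x^*)$. The fifth line is the triangle inequality combined with $\|J_k^Tw\|\le\|J_k^T\|\,\|w\|$ and $\|\lambda_kL^TL(x_k-x^*)\|\le\lambda_k\|L^T\|\,\|L(x_k-x^*)\|=\lambda_k\|L^T\|\,\|x_k-x^*\|_L$; the final identity, using the definition $\|x\|_L=\|Lx\|$, is precisely what converts the scaling contribution into an $L$-seminorm factor, and is the reason the Euclidean error ends up controlled by the $L$-seminorm.

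The last inequality is the only substantive step: it applies the TCC-$L$ condition \eqref{eq:tcc} with $x=x_k$ and $\tilde{x}=x^*$. Since $F(x^*)=y$, the Taylor remainder $\|J_k(x^*-x_k)-F(x^*)+F_k\|=\|F_k-y+J_k(x^*-x_k)\|$ is bounded by $c\,\|x_k-x^*\|_L\,\|F(x^*)-F_k\|=c\,\|x_k-x^*\|_L\,\|F_k-y\|$, yielding the stated bound. To invoke \eqref{eq:tcc} I would first note that $x_k\in B_L(x_0,\rho)$ by Theorem~\ref{teo:converexactdata} and $x^*\in B_L(x_0,\rho)$ by Assumption~\ref{assumption2.2bellavia2016}, so both arguments lie in the admissible ball. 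I expect no real obstacle beyond sign bookkeeping: the residual appearing in the bracket must be read as $F_k-y+J_k(x^*-x_k)$, i.e.\ with $x^*-x_k$, so that it coincides exactly with the Taylor-remainder expression controlled by \eqref{eq:tcc}; the intermediate lines should be written consistently with this sign so that the equality signs are exact, while the magnitude, and hence the final estimate, is unaffected.
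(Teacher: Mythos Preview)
Your proposal is correct and mirrors exactly what the paper does: the Remark in the paper \emph{is} the chain of (in)equalities, with no separate proof, and your step-by-step justification (iteration formula, invertibility from Assumption~\ref{Hip_nullJLnew}, submultiplicativity, regrouping, triangle inequality with $\|L^TL v\|\le\|L^T\|\,\|v\|_L$, and finally the TCC-$L$ bound \eqref{eq:tcc} using $F(x^*)=y$) is precisely the intended reading. Your observation about the sign bookkeeping is also apt: the paper's intermediate lines carry a sign slip (the bracket should contain $J_k(x^*-x_k)$ rather than $J_k(x_k-x^*)$ for the equalities to be literal), but as you note this does not affect the norms or the final estimate.
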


				\begin{teo}\label{teo:Convergenciaxkexato}			
				Let Assumptions~\ref{Hip_nullJLnew}, \ref{hip:tccLTL} and \ref{assumption2.2bellavia2016} hold and $x_k$ be the Levenberg–Marquardt 
				iterates determined by using \eqref{s1} and \eqref{s2}  with exact data. Then, the sequence $\{x_k\}$ converges to a solution of \eqref{probdisc}. 
			\end{teo}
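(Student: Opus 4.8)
The plan is to adapt Hanke's classical argument for regularizing Levenberg--Marquardt methods to the seminorm geometry induced by $L$, and then to upgrade convergence from $\norm{\cdot}_L$ to the Euclidean norm using the completeness condition \eqref{comp-condition}. By Theorem~\ref{teo:converexactdata} we may already use that every iterate lies in $B_L(x_0,\rho)$, that $\{\norm{x_k-x^*}_L\}$ is monotonically decreasing (hence convergent), and that $\norm{F_k-y}\to 0$; in particular $\norm{x_k-x^*}_L\le\norm{x_0-x^*}_L<q/c$ for all $k$.

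\textbf{Step 1: $\{x_k\}$ is Cauchy in the $L$-seminorm.} Fix $\eta>0$ and choose $N$ with $\big|\norm{x_j-x^*}_L^2-\norm{x_k-x^*}_L^2\big|<\eta$ for all $j,k\ge N$. For $N\le j\le k$, let $l\in\{j,\dots,k\}$ minimize $\norm{F_i-y}$ over $j\le i\le k$. Using $\norm{x_j-x_l}_L^2=\norm{x_j-x^*}_L^2-\norm{x_l-x^*}_L^2-2\langle x_j-x_l,x_l-x^*\rangle_{L^TL}$ and the analogous identity for $\norm{x_k-x_l}_L^2$, and expanding $x_j-x_l=-\sum_{i=j}^{l-1}d_i$, $x_k-x_l=\sum_{i=l}^{k-1}d_i$, it remains to bound $|\langle d_i,x_l-x^*\rangle_{L^TL}|$. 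From \eqref{eq:isolandoLtL} one gets $\langle d_i,x_l-x^*\rangle_{L^TL}=-\tfrac1{\lambda_i}\langle J_i(x_l-x^*),F_i-y+J_id_i\rangle$, so Cauchy--Schwarz and \eqref{eq:qconditionigualdade} give $|\langle d_i,x_l-x^*\rangle_{L^TL}|\le\tfrac{q}{\lambda_i}\norm{J_i(x_l-x^*)}\,\norm{F_i-y}$. Applying \eqref{eq:tcc} at $x_i$ with $\tilde x=x^*$ and with $\tilde x=x_l$, and using $\norm{x_i-x^*}_L,\norm{x_l-x^*}_L\le\norm{x_0-x^*}_L$ together with $\norm{F_l-y}\le\norm{F_i-y}$, yields $\norm{J_i(x_l-x^*)}\le C_1\norm{F_i-y}$ with a constant $C_1$ depending only on $q$. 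Finally \eqref{eq:plusganho} with $\norm{F_i-y+J_id_i}=q\norm{F_i-y}$ gives $\tfrac1{\lambda_i}\norm{F_i-y}^2\le C_2\big(\norm{x_i-x^*}_L^2-\norm{x_{i+1}-x^*}_L^2\big)$, so both sums telescope and are each bounded by a fixed multiple of $\eta$. Hence $\norm{x_j-x_l}_L^2$ and $\norm{x_l-x_k}_L^2$ are $O(\eta)$, and $\norm{x_j-x_k}_L\le\norm{x_j-x_l}_L+\norm{x_l-x_k}_L=O(\sqrt\eta)\to0$; thus $\{Lx_k\}$ is Cauchy in $\mathbb{R}^p$.

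\textbf{Step 2: $\{x_k\}$ is Cauchy in $\mathbb{R}^n$ and its limit solves \eqref{probdisc}.} Invoking \eqref{comp-condition} at $x_k$ we have $\gamma\norm{x_j-x_k}^2\le\norm{J_k(x_j-x_k)}^2+\norm{L(x_j-x_k)}^2$. Applying \eqref{eq:tcc} at $x_k$ with $\tilde x=x_j$ and using $\norm{x_j-x_k}_L\le2\norm{x_0-x^*}_L$, we obtain $\norm{J_k(x_j-x_k)}\le(1+2q)\big(\norm{F_j-y}+\norm{F_k-y}\big)$, which tends to $0$ as $j,k\to\infty$ by Theorem~\ref{teo:converexactdata}, while $\norm{L(x_j-x_k)}=\norm{x_j-x_k}_L\to0$ by Step~1. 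Hence $\{x_k\}$ is Cauchy and converges to some $\hat x$, and by continuity of $F$ together with $\norm{F_k-y}\to0$ we get $F(\hat x)=y$, i.e.\ $\hat x$ is a solution of \eqref{probdisc}.

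\textbf{Main obstacle.} The technical heart is Step~1, where the reference point must be taken to be the smallest-residual iterate $x_l$ rather than an endpoint, so that the TCC-$L$ bounds at the intermediate indices and the inequality $\norm{F_l-y}\le\norm{F_i-y}$ apply uniformly in both telescoping sums and produce $j,k,l$-independent constants. The genuinely new difficulty, absent from Hanke's analysis, is Step~2: since $L$ may be singular, Step~1 only controls $Lx_k$, and convergence of $x_k$ itself is recovered solely by coupling it with the decay of $\norm{J_k(x_j-x_k)}$ through the completeness condition \eqref{comp-condition}.
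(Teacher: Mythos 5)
Your proof is correct, but it follows a genuinely different route from the paper's. You establish that $\{x_k\}$ is Cauchy directly: first in the $L$-seminorm via the classical Hanke-style telescoping argument (choosing the minimal-residual intermediate iterate $x_l$, expressing $\langle d_i, x_l-x^*\rangle_{L^TL}$ through \eqref{eq:isolandoLtL}, and summing the gains from \eqref{eq:plusganho}), and then in the Euclidean norm by coupling the completeness condition \eqref{comp-condition} with the TCC-$L$ bound $\|J_k(x_j-x_k)\|\leq(1+2q)\big(\|F_j-y\|+\|F_k-y\|\big)$; the limit is a solution by continuity. The paper instead avoids the Cauchy machinery altogether: it uses \eqref{comp-condition} with $v=x_k-x^*$, the monotonicity of $\|x_k-x^*\|_L$ and the bound $\|J_k(x_k-x^*)\|\leq(1+q/\theta)\|F_k-y\|\to 0$ to conclude boundedness of $\{x_k\}$, extracts a convergent subsequence whose limit $\bar x$ satisfies $F(\bar x)=y$, and then forces full-sequence convergence by re-applying the gain inequality \eqref{eq:ganho} with $\bar x$ in place of $x^*$ (so that $\|x_k-\bar x\|_L$ is monotone) together with \eqref{comp-condition} and the smallness of $\|J_k(x_k-\bar x)\|$. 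What each buys: your argument needs no compactness or subsequence extraction and stays closest to Hanke's original analysis, at the cost of the more technical three-point decomposition in Step 1 with its $j,k,l$-uniform constants; the paper's argument is shorter because it recycles Lemma~\ref{lema:GANHO}, but it implicitly requires condition \eqref{eq:lemaexistqcondition} to hold with the subsequential limit $\bar x$ as reference solution, a verification your approach sidesteps entirely. Both proofs hinge on the same two new ingredients relative to the classical setting, namely the $L$-seminorm gain estimate and the completeness condition to recover Euclidean convergence from $L$-seminorm control, so your Step 2 correctly identifies where the singularity of $L$ must be compensated.
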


			\begin{proof}
				Without loss of generality, assume that $x_k - x^* \notin N(L)$, otherwise, from \eqref{eq:limitsupLTLeuclidiana}, we would have $x_{k+1} = x^*$, and there would be nothing to prove.

				From Assumption~\ref{Hip_nullJLnew} for $v=(x_k-x^*)$ we have
				\begin{equation}\label{eq:hipnullforxkx*}
					 \gamma\norm{x_k-x^*}^2 \leq \norm{J_k(x_k-x^*)}^2+\norm{L(x_k-x^*)}^2
				\end{equation}
				
				In the proof of Theorem~\ref{teo:converexactdata} we show that $\norm{x_k-x^*}_{L}$ is monotonically decreasing, then $\norm{x_k-x^*}_{L}$ is convergent, moreover $\norm{F_k-y} \rightarrow 0$ as $k\rightarrow \infty$. 
				
				From \eqref{eq:lemaexistqcondition} 
				$$\norm{F_k-y}-\norm{J_k(x^*-x_k)} \leq \frac{q}{\theta}\norm{F_k+-y}$$
				and 
				$$-\norm{F_k-y}+\norm{J_k(x^*-x_k)} \leq \frac{q}{\theta}\norm{F_k-y}.$$
				
				Then, 
				$$ (1-\frac{q}{\theta})\norm{F_k-y}\leq\norm{J_k(x^*-x_k)} \leq (1+\frac{q}{\theta})\norm{F_k-y},$$
				and $\norm{J_k(x_k-x^*)}$ tends to zero as $k\rightarrow \infty$.

				From \eqref{eq:hipnullforxkx*}, we conclude that $\{x_k\}$ is bounded. Therefore, $\{x_k\}$ has a convergent subsequence. Let $\bar{x}$ be the limit of this subsequence  
				for $k \in \mathcal{K}$. Since $F$ is continuous and $F(x_k) \to y$, it follows that $F(\bar{x}) = y$.  
				
				Let $\varepsilon > 0$. From \eqref{eq:hipnullforxkx*} and \eqref{eq:ganho}, for $k > \bar{k} \in \mathcal{K}$ such that $\norm{x_{\bar{k}} - \bar{x}} \leq \frac{\gamma \varepsilon^2}{2\norm{L}^2}$ and $\norm{J_k(\bar{x} - x_k)}^2 \leq \frac{\gamma \varepsilon}{2}$, we obtain  
				\begin{equation}
					\begin{aligned}
						\gamma\norm{x_k - \bar{x}}^2 &\leq \norm{J_k(x_k - \bar{x})}^2 + \norm{L(x_k - \bar{x})}^2 \\  
						&\leq \norm{J_k(x_k - \bar{x})}^2 + \norm{x_{\bar{k}} - \bar{x}}_{L}^2 - \sum_{l = \bar{k}}^k \norm{x_{l+1} - \bar{x}}_{L}^2 \\  
						&\leq \norm{J_k(x_k - \bar{x})}^2 + \norm{L}^2 \norm{x_{\bar{k}} - \bar{x}}^2 \\  
						&\leq \frac{\gamma \varepsilon}{2} + \frac{\norm{L}^2 \gamma \varepsilon^2}{2\norm{L}^2} \\  
						&= \gamma \varepsilon^2.
					\end{aligned}
				\end{equation}
				Then, $\norm{x_k - \bar{x}} \leq \varepsilon$ and we conclude that $\{x_k\}$ converges to a solution of \eqref{probdisc}.
			\end{proof}
		
		\begin{teo}\label{teo:convergenciacomruido}
			Let Assumptions~\ref{Hip_nullJLnew}, \ref{hip:tccLTL} and \ref{assumption2.2bellavia2016} hold and $x_k$ be the Levenberg–Marquardt
			iterates determined by using \eqref{s1} and \eqref{s2}. For noisy data, suppose $k \leq k^*(\delta)$where $k^*(\delta)$ is
			defined in \eqref{criteriodiscrepancia}. Then, any iterate $x_k$ belongs to $B_{L}(x_0,\rho)$. Moreover the stopping criterion \eqref{criteriodiscrepancia} is
			satisfied after a finite number $k^*(\delta)$ of iterations and  $\{x_{k^*(\delta)}^{\delta}\}$   converges to a solution
			of \eqref{probdisc} as $\delta$ tends to zero.	
		\end{teo}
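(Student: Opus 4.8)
The plan is to mirror the structure of Hanke's original regularization argument \cite{hanke1997regularizing}, but carried out throughout in the $L$-seminorm geometry, reusing the ``gain'' estimate of Lemma~\ref{lema:GANHO} and the continuity statement of Proposition~\ref{prop:contilambda}. I would organize the proof in three parts, corresponding to the three assertions of the theorem: (i) all iterates up to $k^*(\delta)$ stay in $B_L(x_0,\rho)$; (ii) $k^*(\delta)$ is finite; (iii) stability as $\delta\to 0$.

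For part (i) I would argue by induction on $k$, exactly as in the proof of Theorem~\ref{teo:converexactdata}, but now tracking the noise term. The point is that while $k<k^*(\delta)$ the discrepancy principle \eqref{criteriodiscrepancia} guarantees $\|F_k-y^\delta\|>\tau\delta$, and combining this with the TCC-$L$ condition \eqref{eq:tcc} applied at $x=x_k^\delta$, $\tilde x=x^*$ (using $F(x^*)=y$ and $\|y-y^\delta\|\le\delta$) yields
\[
\|F_k-y^\delta+J_k(x^*-x_k^\delta)\|\le c\|x_k^\delta-x^*\|_L\|F_k-y\|+\delta\le \big(c\|x_k^\delta-x^*\|_L(1+\tfrac1\tau)+\tfrac1\tau\big)\|F_k-y^\delta\|,
\]
so that hypothesis \eqref{eq:lemaexistqcondition} of Lemma~\ref{lema:GANHO} holds with a suitable $\theta>1$ provided $c\|x_k^\delta-x^*\|_L(1+\tau)+1<q\tau$, which is exactly Assumption~\ref{assumption2.2bellavia2016} for the base case $k=0$. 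Lemma~\ref{lema:GANHO}, inequality \eqref{eq:ganho}, then gives $\|x_{k+1}^\delta-x^*\|_L\le\|x_k^\delta-x^*\|_L$, closing the induction and in particular keeping every $x_k^\delta$ inside $B_L(x^*,\rho)\subset B_L(x_0,2\rho)$ (and, using the triangle inequality in the $L$-seminorm together with $\|x_0-x^*\|_L<\rho$, inside $B_L(x_0,\rho)$ where \eqref{eq:tcc} is available for the next step).

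For part (ii), finiteness of $k^*(\delta)$, I would sum the sharper gain estimate \eqref{eq:plusganho2} over $k=0,\dots,K-1$: since the left-hand sides telescope and are nonnegative, $\sum_{k=0}^{K-1}\|F_k-y^\delta\|^2$ is bounded by a constant times $\|x_0-x^*\|_L^2$ (the factors $\zeta_{p,k}^2$ are uniformly bounded above because $J$ is uniformly bounded on $B_L(x_0,\rho)$, while $\mu_{p,k}\le 1$). If the discrepancy principle were never triggered we would have $\|F_k-y^\delta\|>\tau\delta$ for all $k$, forcing the series to diverge when $\delta>0$ — a contradiction. Hence $k^*(\delta)<\infty$.

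For part (iii), stability, I would follow the standard diagonal/subsequence argument. Take $\delta_n\to 0$ with data $y^{\delta_n}$; write $k_n:=k^*(\delta_n)$. Two cases: if $k_n$ has a bounded subsequence, it is eventually constant along a further subsequence, say $k_n\equiv \bar k$; then by Proposition~\ref{prop:contilambda} and Remark~\ref{obs:xkcontydelta} the map $y^\delta\mapsto x_{\bar k}^{\delta}$ is continuous, so $x_{k_n}^{\delta_n}\to x_{\bar k}^{0}$, and the exact-data theory (Theorem~\ref{teo:Convergenciaxkexato}) identifies $x_{\bar k}^0$ with a solution of \eqref{probdisc} because the discrepancy principle at level $0$ forces $F(x_{\bar k}^0)=y$. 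If instead $k_n\to\infty$, I would show $\{x_{k_n}^{\delta_n}\}$ is a Cauchy sequence (equivalently, convergent) by comparing it to the exact iterates: for fixed $m$ and $k_n\ge m$, monotonicity of the $L$-seminorm distance (part (i)) gives $\|x_{k_n}^{\delta_n}-x^*\|_L\le\|x_m^{\delta_n}-x^*\|_L\to\|x_m^0-x^*\|_L$ as $n\to\infty$, and by Theorem~\ref{teo:converexactdata}--\ref{teo:Convergenciaxkexato} the right-hand side can be made arbitrarily close to $\|\bar x-x^*\|_L$ (with $\bar x$ the exact-data limit) by choosing $m$ large; then the telescoped estimate \eqref{eq:ganho} together with Assumption~\ref{Hip_nullJLnew} (inequality $\gamma\|x_{k_n}^{\delta_n}-\bar x\|^2\le\|J(x_{k_n}^{\delta_n})(x_{k_n}^{\delta_n}-\bar x)\|^2+\|x_{k_n}^{\delta_n}-\bar x\|_L^2$, exactly as at the end of the proof of Theorem~\ref{teo:Convergenciaxkexato}) upgrades $L$-seminorm control to full Euclidean convergence $x_{k_n}^{\delta_n}\to\bar x$, and continuity of $F$ with $\|F_{k_n}-y^{\delta_n}\|\le\tau\delta_n\to 0$ gives $F(\bar x)=y$.

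The main obstacle I anticipate is part (iii) in the case $k_n\to\infty$: controlling $x_{k_n}^{\delta_n}$ uniformly in $n$ when the stopping index drifts to infinity. The delicate point is that the $L$-seminorm is genuinely degenerate, so monotone decrease of $\|x_k^\delta-x^*\|_L$ alone does not pin down the Euclidean limit; one must invoke the completeness condition \eqref{comp-condition} to recover $\|x_k^\delta-x^*\|$ from $\|J_k(x_k^\delta-x^*)\|$ and $\|x_k^\delta-x^*\|_L$, and to do this one needs the residual bound $\|J_k(x_k^\delta-x^*)\|\le(1+q/\theta)\|F_k-y^\delta\|\le(1+q/\theta)(\tau\delta_n+\delta_n)\to 0$ near the stopping index. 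Making the two limits — $n\to\infty$ and the cutoff $m\to\infty$ — interact correctly (an $\varepsilon/3$-type argument) is where the care is needed; everything else is a transcription of Hanke's argument into the $L$-geometry using the lemmas already proved.
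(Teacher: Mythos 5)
Your proposal is correct and follows essentially the same route as the paper's proof: the TCC-$L$ estimate combined with the discrepancy bound $\tau\delta<\|F_k-y^\delta\|$ to verify \eqref{eq:lemaexistqcondition} and invoke Lemma~\ref{lema:GANHO} for monotonicity, the telescoped sum of \eqref{eq:plusganho2} for finiteness of $k^*(\delta)$, and the two-case stability argument (continuity via Proposition~\ref{prop:contilambda} for a bounded stopping index, and the completeness condition \eqref{comp-condition} together with the residual sandwich from \eqref{eq:lemaexistqcondition} when $k^*(\delta_n)\to\infty$). The only differences are cosmetic (your $\varepsilon/3$ phrasing of Case~2 versus the paper's direct comparison with the exact iterates).
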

		\begin{proof}
			
			First of all, we show monotonicity of $\norm{x_k^\delta - x^*}_{L}$ up to the stopping index defined by \eqref{criteriodiscrepancia}.  If $k^*(\delta) = 0$, nothing has to be shown. Let us now assume that $k^*(\delta) \geq 1$, then for $k=0$ note that Assumption
			\ref{hip:tccLTL} implies that 
			
			\begin{equation}\label{eq:aux1teotaudeltadata}
				\begin{aligned}
					\norm{F_0-y^\delta+J_k(x^*-x_0^\delta)}&=\norm{J_0(x^*-x_0^\delta) -F(x^*)+y+F_0-y^\delta}\\&\leq c\norm{x_0^\delta-x^*}_{L}\norm{F_0-F(x^*)}+\norm{y-y^\delta}\\ &\leq c\norm{x_0^\delta-x^*}_{L}\norm{F_0-y}+\delta\\
					&\leq c\norm{x_0^\delta-x^*}_{L}\norm{F_0-y^\delta+y^\delta-y}+\delta\\
					&\leq c\norm{x_0^\delta-x^*}_{L}\norm{F_0-y^\delta}+(1+c\norm{x_0^\delta-x^*}_{L})\delta \\
					&\leq c\norm{x_0^\delta-x^*}_{L}\norm{F_0-y^\delta} +(1+c\norm{x_0^\delta-x^*}_{L})\frac{\norm{F_0-y^\delta}}{\tau}\\
					&= \frac{1+c(1+\tau)\norm{x_0^\delta-x^*}_{L}}{\tau}\norm{F_0-y^\delta}
				\end{aligned}
			\end{equation}
			If $\norm{x_0^\delta-x^*}_{L}=0$, \eqref{eq:aux1teotaudeltadata} and Assumption~\ref{assumption2.2bellavia2016} implies that inequality \eqref{eq:lemaexistqcondition} is satisfied for any $\theta \geq q\tau>1$. Otherwise if  $\norm{x_0^\delta-x^*}_{L}>0$, Assumption \ref{assumption2.2bellavia2016} implies that					for some $\theta=\frac{q\tau}{1+c(1+\tau)\norm{x_0^\delta-x^*}_{L}}>1$ inequality \eqref{eq:lemaexistqcondition} is satisfied.
			Define \[
			\theta =
			\begin{cases} 
				\frac{q\tau}{1+c(1+\tau)\norm{x_0^\delta-x^*}_{L}}, & \text{if } \norm{x_0^\delta - x^*}_{L} > 0, \\
				q\tau, & \text{otherwise},
			\end{cases}
			\] 
			Then, from Lemma~\ref{lema:GANHO} we have
			$$\norm{x_{1}^\delta-x^*}_{L} \leq \norm{x_0^\delta-x^*}_{L}.$$ By the induction this inequality remains true for all $k\in \mathbb{N}$ showing
			that the assumptions of Lemma~\ref{lema:GANHO} are satisfied and $\norm{x_k^\delta-x^*}_{L}$ is monotonically decreasing.
			
			Summing up both sides of \eqref{eq:plusganho2} from $0$ to $k^* -1$, one obtains with \eqref{criteriodiscrepancia} that 
			$$k^*\tau^2\delta^2 \leq \sum^{k^*-1}_{k=0}\norm{y^\delta-F_k}^2 \leq \frac{\theta \hat{\zeta}^2}{2(\theta-1)(1-q)q}\norm{x_0-x^*}_{L}, $$
			where
			$\hat{\zeta}=\max{\zeta_{p,i}}$ for ${0 \leq i\leq k^*-1}.$
			Thus, $k^*$ is finite for $\delta>0$.
			
			Let $\bar{x}$ be the limit of the sequence \{$x_k$\} corresponding to the exact data $y$ (see Theorem~\ref{teo:Convergenciaxkexato}) and let \{$\delta_n$\} be a sequence of values of $\delta$ converging to zero as $n \to \infty$. Denote by $y^{\delta_n}$ a corresponding sequence of perturbed data, and by $k^*(\delta_n)$ the stopping index determined from the discrepancy principle \eqref{criteriodiscrepancia} applied with $\delta=\delta_n$. We now show the convergence in the two possible cases.
			
			Case 1: Assume that $\tilde{k}$ is a finite accumulation point of $\{k^*(\delta_n)\}$. Without loss of generality, for the monotonicity of \eqref{prob1}, we can assume that $k^*(\delta_n) = \tilde{k}$ for all $n \in \mathbb{N}$. Thus, from the definition of $k^*(\delta_n)$ it follows that 
			\begin{equation}\label{eq:2.20kalt}
				\norm{y^{\delta_n}- F(x_{\tilde{k}}^{\delta_n})}\leq \tau \delta_n.
			\end{equation}			
			
			Then, $\norm{y^{\delta_n}- F(x_{\tilde{k}}^{\delta_n})}$ tends to zero as $\delta_n \rightarrow 0$. 
			As $\tilde{k}$ is fixed, and $x_{\tilde{k}}^{\delta_n}$ depends continuously on $y^\delta$ (see Remark~\ref{obs:xkcontydelta}), we obtain
			$$x_{\tilde{k}}^{\delta_n} \rightarrow x_{\tilde{k}}, F(x_{\tilde{k}}^{\delta_n})\rightarrow F(x_{\tilde{k}}) \text{ as } n\rightarrow \infty.$$
			
			From \eqref{eq:2.20kalt} we have that $F(x_{\tilde{k}}^{\delta_n})=y^{\delta_n}$ when $\delta_n\rightarrow 0$ and the $\tilde{k}$-th iterate with exact data $y$ is a solution of $F(x) = y= F(\bar{x})$ from Theorem~\ref{teo:Convergenciaxkexato} (note that in the exact data for this case $x_{\tilde{k}+1}=x_{\tilde{k}}$). Then, we can conclude that $x_{\tilde{k}}^{\delta_n} \rightarrow x_{\tilde{k}}=\bar{x}.$
			Then $x_{\tilde{k}}^{\delta_{n}} \to \bar{x}$ as $\delta_n \to 0$.

			Case 2: Considere where $k^*(\delta_n) \to \infty$ as $n \to \infty$. 	
				From Assumption~\ref{Hip_nullJLnew} for $v=(x_{k^*(\delta_n)}^{\delta_n}-\bar{x})$ we have
			\begin{equation}\label{eq:hipnullforxkx*2}
				\gamma\norm{x_{k^*(\delta_n)}^{\delta_n}-\bar{x}}^2 \leq \norm{J_{k^*(\delta_n)}(x_{k^*(\delta_n)}^{\delta_n}-\bar{x})}^2+\norm{L(x_{k^*(\delta_n)}^{\delta_n}-\bar{x})}^2=\norm{J_{k_n}(x_{k^*(\delta_n)}^{\delta_n}-\bar{x})}^2+\norm{x_{k^*(\delta_n)}^{\delta_n}-\bar{x}}_{L}^2
			\end{equation}
			
			For $k^*(\delta_n)>k_n>0$, \eqref{eq:ganho} implies that
			
			\begin{equation}\label{eq:auxcase2withdisc2}
				\norm{x_{k^*(\delta_n)}^{\delta_n}-\bar{x}}_{L}\leq  \norm{x_{k_n}^{\delta_n}-\bar{x}}_{L}\leq \norm{x_{k_n}^{\delta_n}-x_k}_{L} + \norm{x_k-\bar{x}}_{L}
			\end{equation}
			
			where $x_k$ is the iterate with exact data. 
			From Theorem~\ref{teo:Convergenciaxkexato}, exist $\varepsilon>0$ (for for sufficiently large $k$) such that $\norm{x_k-\bar{x}}\leq \frac{1}{2\norm{L}}\varepsilon$. As $x_k^\delta$ depends continuously on the data $y^\delta$ and $x_k$ depends continuously on the $y$, then for $n \rightarrow \infty$, $\norm{x_{k_n}^{\delta_n}-x_k}\leq \frac{1}{2\norm{L}}\varepsilon.$
			From, \eqref{eq:auxcase2withdisc2} we have
			$$\norm{x_{k^*(\delta_n)}^{\delta_n}-\bar{x}}_{L}\leq \norm{L}\norm{x_{k_n}^{\delta_n}-x_k} + \norm{L}\norm{x_k-\bar{x}}\leq \varepsilon.$$
			 
			Moreover, from \eqref{eq:lemaexistqcondition} 
			$$ (1-\frac{q}{\theta})\norm{F_{k^*(\delta_n)}-y^{\delta_n}}\leq\norm{J_{k^*(\delta_n)}(x_{k^*(\delta_n)}-\bar{x})} \leq (1+\frac{q}{\theta})\norm{F_{k^*(\delta_n)}-y^{\delta_n}},$$
			and $\norm{J_{k^*(\delta_n)}(x_{k^*(\delta_n)}-\bar{x})}$ tends to zero as $n\rightarrow \infty$.
			
			We conclude from \eqref{eq:hipnullforxkx*2} that 
			$\norm{x_{k^*(\delta_n)}^{\delta_n}-\bar{x}}$ converge for zero when $n$ tends to infinity and $\delta$ tends to zero.
			
		\end{proof}


				\section{Final remarks}\label{sec:conclusion}
				
				In this work, we proposed and analyzed a regularization for the Levenberg–Marquardt method with Singular Scaling (LMMSS) applied to nonlinear inverse problems with noisy data. By allowing the use of a possibly singular scaling matrix $L$, the method extends the classical Levenberg–Marquardt approach and provides greater flexibility in the regularization process.
				
				We introduced a variant of the classical Tangent Cone Condition adapted to the $L$-induced geometry, which was crucial for establishing the regularizing properties of the method. Under this assumption, we proved that the iterates produced by the LMMSS method converge to a solution of the unperturbed problem as the noise level tends to zero.
				
				A key contribution of this work is the derivation of an inequality that estimates the decrease in the $L$-norm distance to the exact solution at each iteration. This result played a central role in the convergence analysis, both in the exact and noisy data settings. Additionally, we showed that the regularization parameter $\lambda_k^q$ implicitly defined via a $q$-condition—varies continuously with the data, ensuring the stability of the method with respect to perturbations.
				
				
				A theoretical investigation of the TCC-L assumption in concrete applications, as well as the development of adaptive strategies for selecting the parameter 
				$q$, will be pursued in future work, potentially supported by numerical experiments.
				
				This study represents a first step toward the theoretical foundation of the LMMSS method as a regularization tool. Future work may include the extension of the analysis to the case of nonzero-residual problems, as well as numerical experiments to assess the practical performance of the method in inverse problems.
				
				\section*{Acknowledgments}
				This work was supported by Brazilian agencies FAPESC (Fundação de Amparo à Pesquisa e Inovação do Estado de Santa Catarina) [grant number 2023TR000360] and CNPq (Conselho Nacional de Desenvolvimento Científico e Tecnológico). DG ackownledges the support of CNPq (Conselho Nacional de Desenvolvimento Científico e Tecnológico), Brazil [grant number 305213/2021-0].

				\bibliographystyle{abbrv}
				\bibliography{refs}
				
			\end{document}